\newtheorem{thm}{Theorem}[section]
\newtheorem{lem}[thm]{Lemma}
\newtheorem{cor}[thm]{Corollary}
\theoremstyle{definition}
\newtheorem{defn}[thm]{Definition}
\newtheorem{remark}[thm]{Remark}
\newcommand{\CC}{{\mathbb C}}
\newcommand{\FF}{{\mathbb F}}
\newcommand{\NN}{{\mathbb N}}
\newcommand{\QQ}{{\mathbb Q}}
\newcommand{\ZZ}{{\mathbb Z}}
\newcommand{\cA}{{\mathcal A}}
\newcommand{\cC}{{\mathcal C}}
\newcommand{\cD}{{\mathcal D}}
\newcommand{\cG}{{\mathcal G}}
\newcommand{\cH}{{\mathcal H}}
\newcommand{\cK}{{\mathcal K}}
\newcommand{\cS}{{\mathcal S}}
\newcommand{\cT}{{\mathcal T}}
\newcommand{\cV}{{\mathcal V}}
\def\pa{\makebox[.5em][l]
            {a\hspace{-.23em}\begin{picture}(0,0)
                             \put(0,0){\raisebox{-1.4ex}{`}}
                             \end{picture}
            }}%
\newcounter{picture}
\newcommand{\D}{{\Delta}}
\newcommand{\G}{{\Gamma}}
\newcommand{\Om}{{\Omega}}
\newcommand{\w}{{\omega}}
\newcommand{\aut}{{\text{\rm Aut}}}
\newcommand{\autD}{{\text{\rm Aut}(\D)}}
\newcommand{\gpd}{{\G'}}
\newcommand{\Pmn}{{P_{m,n}}}
\newcommand{\sh}{{\sigma}}
\newcommand{\talpha}{{\widetilde{\alpha}}}
\newcommand{\tA}{{\widetilde{A}}}
\newcommand{\tbeta}{{\widetilde{\beta}}}
\newcommand{\tf}{{\widetilde{f}}}
\newcommand{\tg}{{\widetilde{g}}}
\newcommand{\tgamma}{{\widetilde{\gamma}}}
\newcommand{\shaut}{{\text{\rm Aut}_{\text{\rm S}}(\D)}}
\newcommand{\tTD}{{\widetilde{T}_D}}
\newcommand{\Tzz}{{T_{z,z}}}
\newcommand{\vD}{{\cV_\D}}
\newcommand{\Wmn}{{W_{m,n}}}
\newcommand{\Wp}{{W_{p,0}}}
\newcommand{\st}{{\; |\;}}
\newcommand{\twohd}{{{}\leftrightarrow{}}}
\newcommand{\sX}{{s'}}
\begin{document}

\title{A Haagerup Inequality for $\tA_1\times\tA_1$ and $\tA_2$ Buildings}
\author{Jacqui Ramagge}
\address{Mathematics Department, University of Newcastle, Callaghan, NSW 2308,
Australia}
\email{jacqui@maths.newcastle.edu.au}
\author{Guyan Robertson }
\address{Mathematics Department, University of Newcastle, Callaghan, NSW
2308, Australia}
\email{guyan@maths.newcastle.edu.au}
\author{Tim Steger}
\address{Istituto Di Matematica e Fisica, Universit\`a degli Studi di
Sassari, Via Vienna 2, 07100 Sassari, Italia}
\email{steger@ssmain.uniss.it}
\thanks{This research was partly funded by University of Newcastle RMC Project
Grant number 45/290/501 and by the Italian MURST. The first two
authors would like to thank the University of Newcastle study leave
program for support during the later stages of this project and the
University of Sassari for support and hospitality.}

\begin{abstract}
Haagerup's inequality for convolvers on free groups may be interpreted
as a result on~$\tA_1$ buildings, i.e.~trees. Here are proved
analogous inequalities for discrete groups acting freely on the
vertices of~$\tA_1\times\tA_1$ and~$\tA_2$ buildings. The results
apply in particular to groups of type-rotating automorphisms acting
simply transitively on the vertices of such buildings.  These results
provide the first examples of higher rank groups with property~(RD).
\end{abstract}

\maketitle

\section{Introduction}

U.~Haagerup has given a beautiful and useful estimate for convolvers
on a free group \cite[Lemma 1.4]{Hag}.  Suppose~$\G$ is the free group
on a set of generators $N_+$ and let~$N$ consist of the generators
from~$N_+$ and their inverses.  Each $c\in\G$ can be written uniquely
as $c=a_1a_2\cdots a_n$ with $a_j\in N$ and $a_ja_{j+1}\neq 1$.  This
product is called the reduced word for $c$, and if the reduced word
has $n$~factors, we say that $c$~is a word of length~$n$.  Haagerup's
inequality applies to a function $g\in\ell^1(\G)$ supported on the
words of length~$n$ and it asserts that
\[
\| f*g \|_2 \leq (n+1) \|f\|_2\|g\|_2 .
\]
Denoting by~$\rho$ the right regular representation of~$\ell^1(\G)$
on~$\ell^2(\G)$, the inequality reads $\|\rho(g)\|\leq (n+1)\|g\|_2$.

In~\cite{Hag} Haagerup's inequality was used in the course of
establishing that the reduced $C^*$-algebra of a free group on
finitely many generators has the metric approximation property.  In
another development, it follows from Haagerup's inequality that
Jolissaint's property~(RD) holds for free groups.  Haagerup's
inequality was extended to word hyperbolic groups in~\cite{Jo,Harp},
proving that they too satisfy property~(RD). This is an ingredient in
the proof of the Novikov conjecture for word hyperbolic groups
\cite{CoM}.  An exposition of this last result may be found
in~\cite[Chapter~III.5]{Connes}. Also a recent paper of A. Nevo \cite{nev}
provides a new application of the Haagerup inequality to ergodic
theorems on groups.

Restating Haagerup's proof geometrically, we find that it gives a
result somewhat more general than was orginally stated.  Suppose
$\Gamma$ acts freely on the vertices of a tree, and let~$d(u,v)$
denote graph theoretic distance.  Fix a vertex $v_0$ and define the
length of $c\in\Gamma$ as
\[
|c|=d(v_0,cv_0) .
\]
If $g\in\ell^1(\G)$ is supported on elements of length~$n$, then
$\|\rho(g)\|\leq (n+1)\|g\|_2$.  To recover the original
inequality, consider the free group~$\G$ acting simply transitively on
its Cayley graph with respect to the generating set~$N$ and take $v_0$
to be $1\in\Gamma$.

Trees are~$\tA_1$ buildings. This paper generalizes Haagerup's inequality
to~$\tA_1\times\tA_1$ and $\tA_2$~buildings.  This is the
first such generalization to ``higher rank'' groups of either
Haagerup's inequality or of property~(RD).

Let $\D$~be an $\tA_1\times\tA_1$ or~$\tA_2$ building.
In~\S\ref{geometric intro} we define the {\bf shape},
$\sh(u,v)\in\NN\times\NN$ between any two vertices~$u,v\in\D$ which
essentially gives the dimensions of the convex hull of the two
vertices in the building sense.  The index of
\[
\shaut = \{ c\in\autD \st
   \text{$\sh(cu,cv)=\sh(u,v)$ for all $u$, $v\in\vD$} \}
\]
in~$\autD$ is~$1$ or~$2$.

Define a function in two variables
\begin{equation*}
p(m,n)=
\begin{cases}
(m+1)(n+1) & \text{($\tA_1\times\tA_1$ case)} \\
1/2(m+1)(n+1)(m+n+2)\sqrt{\max(m,n)+1} &
   \text{($\tA_2$ case).}
\end{cases}
\end{equation*}
Our main result is the following.

\begin{thm}\label{gp-cor}
Suppose~$\D$ is an $\tA_1\times\tA_1$ or~$\tA_2$ building
and~$\G\leq\shaut$ acts freely on the vertices of~$\D$.
Fix any vertex~$v_0\in\D$ and define a shape function~$\sh$ on~$\G$ by
\[
\sh(c)=\sh(v_0,cv_0).
\]
If $g\in\CC\G$ is supported on words of shape $(m,n)$ and~$f\in\ell^2(\G)$,
then
\[
\|f*g\|_2\leq p(m,n)\;\|f\|_2\;\|g\|_2.
\]
\end{thm}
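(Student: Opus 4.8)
The plan is to follow Haagerup's argument geometrically, replacing his ``splitting of a reduced word at position $d$'' by a splitting of the building configuration $(v_0,av_0,cv_0)$ along a \textbf{branch locus}, and summing the resulting operator blocks.

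First I would reduce to a statement inside $\D$. Because $\G$ acts freely on $\vD$ and lies in $\shaut$, one has $\sh(av_0,cv_0)=\sh(v_0,a^{-1}cv_0)$ for all $a,c\in\G$, so $\rho(g)$ on $\ltwo(\G)$ is carried to the operator on $\ltwo(\G v_0)\subseteq\ltwo(\vD)$ with kernel $(av_0,cv_0)\mapsto g(a^{-1}c)$, supported on pairs of vertices of shape $(m,n)$. Decompose $\ltwo(\G)=\bigoplus_{\tau\in\NN\times\NN}H_\tau$, where $H_\tau$ is spanned by $\{a\in\G:\sh(a)=\tau\}$; then $\rho(g)$ carries $H_\tau$ into the finite sum of the $H_{\tau'}$ with $\tau'=\sh(v_0,abv_0)$ for some $b$ of shape $(m,n)$. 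A nonzero block $P_{\tau'}\rho(g)P_\tau$ records a geometric ``type'' $\theta$: how the hull of $av_0,cv_0$ (a copy of $\Pmn$) is glued to the hull of $v_0,av_0$ so as to fill out the hull of $v_0,cv_0$. In the tree case a type is a single overlap length $i\in\{0,\dots,n\}$; in the $\tA_1\times\tA_1$ case it is a pair $(i_1,i_2)\in\{0,\dots,m\}\times\{0,\dots,n\}$, one overlap per factor; in the $\tA_2$ case it is the combinatorial position of $\Pmn$ relative to the incoming hull, and I would show there are exactly $\tfrac12(m+1)(n+1)(m+n+2)$ of these. Grouping blocks by type gives $\rho(g)=\sum_\theta T_\theta$; since all blocks of a fixed type have pairwise orthogonal domains (distinct $\tau$) and pairwise orthogonal ranges (distinct $\tau'$, using that the type determines $\tau'$ from $\tau$ injectively), $\|T_\theta\|=\sup\|P_{\tau'}\rho(g)P_\tau\|$ over that type.

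The heart is the single-block estimate. Fix $\theta$, a block $P_{\tau'}\rho(g)P_\tau$, and $f\in H_\tau$, $h\in H_{\tau'}$, so that $\langle\rho(g)f,h\rangle=\sum f(a)\,g(a^{-1}c)\,\overline{h(c)}$ over $a$ of shape $\tau$ and $c$ of shape $\tau'$ (with $\sh(av_0,cv_0)=(m,n)$, which is forced by $\theta$). I would reorganize this sum over the branch locus $p=p(av_0,cv_0)$ --- the convex subcomplex along which the geodesic hulls of $(v_0,av_0)$ and of $(v_0,cv_0)$ diverge: a single vertex in the $\tA_1$ and $\tA_1\times\tA_1$ cases, a segment of at most $\max(m,n)$ edges in the $\tA_2$ case. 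For each fixed $p$, Cauchy--Schwarz gives
\[
\Bigl|\sum_{a,c} f(a)\,g(a^{-1}c)\,\overline{h(c)}\Bigr|
\;\le\;\Bigl(\sum_{a,c}|g(a^{-1}c)|^2\Bigr)^{1/2}\Bigl(\sum_{a,c}|f(a)|^2\,|h(c)|^2\Bigr)^{1/2};
\]
convexity of $\D$ together with freeness of the action shows that $(a,c)\mapsto a^{-1}c$ is injective on the pairs with branch locus $p$ (from $p$ and $b=a^{-1}c$ one recovers $a$ as the unique element of $\G$ carrying a determined vertex of the hull of $v_0,bv_0$ onto a determined vertex of $p$) and that the $b$'s so produced all have shape $(m,n)$, so the first factor is $\le\|g\|_2$; dropping the coupling between $a$ and $c$ bounds the second factor by a product of partial sums of $|f|^2$ and of $|h|^2$ over the elements compatible with $p$. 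Summing over $p$ and applying Cauchy--Schwarz once more, one uses that a fixed $c$ determines its branch locus uniquely, while a fixed $a$ serves as boundary data for just one branch locus in the $\tA_1$ and $\tA_1\times\tA_1$ cases, but for up to $\max(m,n)+1$ branch loci in the $\tA_2$ case (the branch being a bounded segment rather than a vertex). Hence $\|P_{\tau'}\rho(g)P_\tau\|\le\|g\|_2$ in the first two cases and $\le\sqrt{\max(m,n)+1}\;\|g\|_2$ in the $\tA_2$ case.

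Assembling, $\|f*g\|_2\le\|\rho(g)\|\le\sum_\theta\|T_\theta\|$, which equals $(n+1)\|g\|_2$ for a tree, $(m+1)(n+1)\|g\|_2=p(m,n)\|g\|_2$ for $\tA_1\times\tA_1$, and $\tfrac12(m+1)(n+1)(m+n+2)\sqrt{\max(m,n)+1}\;\|g\|_2=p(m,n)\|g\|_2$ for $\tA_2$. The main obstacle is entirely on the $\tA_2$ side: pinning down the right notion of ``type'' and of branch locus so that both the orthogonality of the blocks within a type and the injectivity of $(a,c)\mapsto a^{-1}c$ hold; proving the exact count $\tfrac12(m+1)(n+1)(m+n+2)$ of types; and proving the bound $\max(m,n)+1$ on the branch-locus multiplicity --- all of which rest on a careful analysis, via retractions, of how the parallelogram-shaped hulls $\Pmn$ of pairs of vertices intersect and fit together in an $\tA_2$ building. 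A subsidiary nuisance, already present for a general free action on a tree, is that the branch locus need not meet $\G v_0$, so group elements cannot literally be factored as $a=a'a''$; the entire bookkeeping must be carried out in $\D$, with only the orbit vertices bearing values of $f$, $g$, $h$.
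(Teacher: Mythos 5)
Your reduction to an operator on $\ell^2(\vD)$ and your handling of the tree and $\tA_1\times\tA_1$ cases are essentially sound and close in spirit to the paper, which formalizes the reduction through the commutant groupoid $\G\backslash(\vD\times\vD)$ and, instead of blocks indexed by the shape from $v_0$, classifies pairs $(x,y)$ by the folding diagram of the single hull $\Pmn(x,y)$ under the retraction centred at a boundary point $\w_0$. The genuine gaps are in the $\tA_2$ case, and they sit exactly where you defer the work. First, the constant $\frac{1}{2}(m+1)(n+1)(m+n+2)$ is not, in the paper, a count of configurations, and it is doubtful it can be one for any notion of ``type'' fine enough to support your orthogonality, injectivity and branch-locus claims. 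The paper's diagrams split as $\cD'(m,n)\cup\cD''(m,n)$; there are many more than $(m+1)(n+1)$ diagrams in $\cD'(m,n)$ (two focal points at the same level), and these are controlled only by grouping them into $(m+1)(n+1)$ families indexed by a distinguished focal point, each family's \emph{sum} being bounded by $\|g\|_2$ through a block decomposition over the focal vertex $z\in\vD$; only the $\frac{1}{2}(m+1)(n+1)(m+n)$ skew diagrams in $\cD''(m,n)$ are bounded one at a time, and only they cost $\sqrt{\max(m,n)+1}\,\|g\|_2$. If you enumerate relative positions of the triple $(v_0,av_0,cv_0)$ and bound each type separately by $\sqrt{\max(m,n)+1}\,\|g\|_2$, then either your types are too coarse for those per-type bounds to be provable, or their number exceeds $\frac{1}{2}(m+1)(n+1)(m+n+2)$ and the sum overshoots $p(m,n)$. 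The grouping idea is missing from your plan.

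Second, the factor $\sqrt{\max(m,n)+1}$ does not arise from a multiplicity count of branch loci per $a$. In the paper it comes from the one genuinely new ingredient: after Hilbert--Schmidt estimates reduce a skew diagram to an operator attached to horizontally retracting segments of shape $(p,0)$ (or $(0,q)$), one needs the lemma that $\bigl|\sum_{(\alpha,\beta,\gamma)\in\cT_p}f_1(\alpha)f_2(\beta)f_3(\gamma)\bigr|\le\sqrt{p+1}\,\|f_1\|_2\|f_2\|_2\|f_3\|_2$, the sum running over equilateral-triangle configurations, proved by a $T^*T$ argument with a decomposition $S_0+\cdots+S_p$ according to shared initial letters. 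Nothing in your sketch plays this role, and your substitute assertions --- that for a fixed type the branch locus is a function of $c$ alone, and that a fixed $a$ is compatible with at most $\max(m,n)+1$ branch loci --- are unproven and delicate precisely because $v_0$, $av_0$, $cv_0$ need not lie in a common apartment; classifying triples relative to the base vertex is substantially harder than classifying foldings of the single hull $\Pmn(x,y)$, which is why the paper retracts from a point at infinity and never analyses triples based at $v_0$. (Your closing remark about the branch locus missing $\G v_0$ is what the paper's groupoid formalism is for.) As it stands, the $\tA_2$ half of the theorem is not established by your argument.
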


\noindent
In the~$\tA_1\times\tA_1$ case our bound is optimal.
 From an analysis of radial functions, we conjecture
that the optimal value of~$p(m,n)$ in the~$\tA_2$ case
is~$1/2(m+1)(n+1)(m+n+2)$. Using a result in~\cite{S}, A.~Valette~\cite{V}
has shown that there is a Haagerup inequality applying to radial functions on
an~$\tA_n$ group.

Among buildings of dimension~$2$ only
$\tA_1\times\tA_1$ and $\tA_2$~buildings ever admit simply transitive
actions on their vertices. It was this that led us to consider these two
cases --- by happy coincidence they are also the easiest.
Indeed, Haagerup's original method, broadly conceived,
handles $\tA_1\times\tA_1$ buildings, and only one new ingredient is
required for the $\tA_2$~case.  It is natural to conjecture that
analogous versions of Haagerup's inequality hold for all types of
affine buildings. In fact, Valette has conjectured~\cite[p.~70]{FRR} that
any group acting properly and cocompactly on either a Riemannian symmetric
space or an affine building has property~(RD).

\subsection{Some analytical results used in the proofs}
We note the following well-known results which will be used on several
occasions.  Their proofs are straightforward and so are omitted.

\begin{lem}\label{B}
Let $T:\cH\longrightarrow\cK$ be an operator between two Hilbert
spaces, and suppose we have orthogonal decompositions
$\cH=\oplus_j\cH_j$ and $\cK=\oplus_k\cK_k$.  Expressing~$T$ as an
operator matrix $[T_{kj}]$ where $T_{kj}:\cH_j\longrightarrow\cK_k$,
one has
\[
\| T\| \leq \left\| \:\left[ \, \| T_{kj} \|\, \right] \:\right\|
\leq \left( \sum_{j,k}  \| T_{kj}\|^2 \right)^{1/2}
\]
where all norms are operator norms and
$\left[ \, \| T_{kj} \|\, \right]$ is the matrix with scalar entries
$\| T_{kj} \|$.
\end{lem}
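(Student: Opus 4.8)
The plan is to prove the two displayed inequalities one at a time; neither requires much work. Abbreviate $A:=[\,\|T_{kj}\|\,]$ for the matrix of operator norms, regarded as an operator from $\ell^2$ over the index set of the~$j$'s to $\ell^2$ over the index set of the~$k$'s. The right-hand inequality, $\|A\|\leq(\sum_{j,k}\|T_{kj}\|^2)^{1/2}$, is simply the standard bound of the operator norm of a matrix by its Hilbert--Schmidt norm; if the right-hand side is infinite there is nothing to prove.

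For the left-hand inequality, $\|T\|\leq\|A\|$, I would estimate $\|T\xi\|$ for an arbitrary $\xi\in\cH$. Writing $\xi=\sum_j\xi_j$ along $\cH=\oplus_j\cH_j$, the vector $x:=(\|\xi_j\|)_j$ lies in $\ell^2$ with $\|x\|=\|\xi\|$, and the $\cK_k$-component of $T\xi$ is $(T\xi)_k=\sum_j T_{kj}\xi_j$. The triangle inequality in $\cK_k$ gives $\|(T\xi)_k\|\leq\sum_j\|T_{kj}\|\,\|\xi_j\|=(Ax)_k$, and hence, using orthogonality of the summands of $\cK$,
\[
\|T\xi\|^2=\sum_k\|(T\xi)_k\|^2\leq\sum_k\bigl((Ax)_k\bigr)^2=\|Ax\|^2\leq\|A\|^2\,\|\xi\|^2 .
\]
Taking the supremum over unit vectors $\xi$ yields $\|T\|\leq\|A\|$, which is the assertion.

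The only point needing a little care is the bookkeeping when the index sets are infinite: every sum above consists of nonnegative terms, so all rearrangements and interchanges are harmless, and if $\|A\|=\infty$ both inequalities are vacuous. I do not expect any genuine obstacle here; the whole content of the lemma is the observation that, in operator norm, a block operator $T$ is dominated by the scalar matrix $A$ assembled from the norms of its blocks, after which the Hilbert--Schmidt estimate for $A$ is automatic. This is exactly the form in which the lemma will be applied: to bound $\|T\|$ for an operator presented as a block matrix --- for instance $\rho(g)$ on $\ell^2(\G)$ decomposed by shape, in the proof of Theorem~\ref{gp-cor} --- one instead bounds the norm of the scalar matrix of block norms, which in the applications has few and well-controlled entries in each row and column.
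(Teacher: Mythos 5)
Your proof is correct and is exactly the standard argument the paper has in mind when it states Lemma~\ref{B} without proof as ``straightforward'': bound each component $\|(T\xi)_k\|$ by the $k$-th entry of $A$ applied to the vector $(\|\xi_j\|)_j$, which has the same $\ell^2$-norm as $\xi$, and then bound $\|A\|$ by its Hilbert--Schmidt norm, with the infinite-index bookkeeping harmless by positivity just as you say. (Only your closing aside is slightly off: in the paper the lemma is invoked for the block matrices $T_D=[T_{z_2,z_1}]$ in the $\tA_2$ part of Theorem~\ref{gpd-thm}, acting on $\ell^2(\vD)$ decomposed over focal points, not for $\rho(g)$ on $\ell^2(\G)$ decomposed by shape; this has no bearing on the validity of your proof.)
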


\begin{lem}\label{C}
If $T=[t_{ij}]$, $S=[s_{ij}]$ with $0\leq t_{ij}\leq s_{ij}$
for all $i,j$, then $\| T\| \leq \| S\|$.
\end{lem}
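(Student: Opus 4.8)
The plan is to compute the operator norm through the associated sesquilinear form and to exploit the nonnegativity of the entries in order to reduce to test vectors with nonnegative coordinates. Both $T$ and $S$ are being viewed as operators on the $\ell^2$ space indexed by the matrix indices, so the operator norm is
\[
\|T\| = \sup\{\,|\langle T\xi,\eta\rangle| : \|\xi\|=\|\eta\|=1\,\}.
\]
First I would expand the inner product as $\langle T\xi,\eta\rangle = \sum_{i,j} t_{ij}\,\xi_j\,\overline{\eta_i}$, apply the triangle inequality, and use $t_{ij}\geq 0$ to obtain
\[
|\langle T\xi,\eta\rangle| \leq \sum_{i,j} t_{ij}\,|\xi_j|\,|\eta_i|.
\]

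The heart of the argument is then the entrywise comparison. Writing $|\xi|$ and $|\eta|$ for the vectors with coordinates $|\xi_j|$ and $|\eta_i|$, the hypothesis $t_{ij}\leq s_{ij}$ together with $|\xi_j|,|\eta_i|\geq 0$ gives
\[
\sum_{i,j} t_{ij}\,|\xi_j|\,|\eta_i| \leq \sum_{i,j} s_{ij}\,|\xi_j|\,|\eta_i| = \langle S|\xi|,|\eta|\rangle \leq \|S\|\;\||\xi|\|\;\||\eta|\|,
\]
the last step being Cauchy--Schwarz followed by the definition of the operator norm of $S$. Since taking absolute values of coordinates leaves the $\ell^2$ norm unchanged, $\||\xi|\|=\|\xi\|$ and $\||\eta|\|=\|\eta\|$. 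Combining the two displays yields $|\langle T\xi,\eta\rangle|\leq \|S\|\,\|\xi\|\,\|\eta\|$ for all $\xi,\eta$, and taking the supremum over unit vectors gives $\|T\|\leq\|S\|$.

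There is essentially no genuine obstacle here; the only care required is bookkeeping. If the index sets are infinite one should first restrict $\xi$ and $\eta$ to be finitely supported, so that every sum above is a finite sum and the rearrangements are literally justified, and then pass to general vectors by density. One may also assume $\|S\|<\infty$, since otherwise the conclusion is vacuous. With these remarks the chain of inequalities is valid verbatim, and the proof is complete.
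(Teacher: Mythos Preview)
Your argument is correct and is exactly the standard proof one would give for this entrywise monotonicity fact. The paper itself omits the proof, calling it ``straightforward,'' so there is no alternative approach to compare against; your write-up simply fills in the omitted details.
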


\subsection{The buildings and the shape between a pair of vertices.}%
\label{geometric intro}

Given an~$\tA_n$ building~$\D$, there is a type map~$\tau$ defined on the
vertices of~$\D$ such that $\tau(v)\in\ZZ/(n+1)\ZZ$ for each vertex $v\in\D$.
A brief inductive argument shows that every~$a\in\autD$ gives rise to a
permutation of the set of types.
An automorphism~$a$ of~$\D$ is said to be {\bf type-rotating} if
there exists $i\in\{0,1,\ldots,n\}$ such that $\tau(av)=\tau(v)+i$ for all
vertices $v\in\D$. More generally, suppose that~$\D$ is an
$\tA_{n_1}\times\cdots\times\tA_{n_k}$ building.
There is a type map~$\tau$ on the vertices of~$\D$ where
\[
\tau(v)=
(\tau(v)_1,\ldots,\tau(v)_k)\in\ZZ/(n_1+1)\ZZ\times\cdots\times\ZZ/(n_k+1)\ZZ
\]
is a $k$-tuple.  Again, any automorphism~$a$ of~$\D$ gives rise to a
permutation on the set of types.  We say such an~$a$ is {\bf
type-rotating} if there exists a $k$-tuple
$(i_1,\ldots,i_k)\in\{0,1,\ldots,n_1\}\times\cdots\times\{0,1,\ldots,n_k\}$
such that
\[
\tau(av)=(\tau(v)_1+i_1,\ldots,\tau(v)_k+i_k).
\]
It follows (see~\S\ref{simply transitive A1}) that~$a$ is a
type-rotating automorphism of~$\D$ if and only if it is a Cartesian
product of type-rotating automorphisms of the~$k$ factors.
Recall that an~$\tA_1$ building is a tree and note that any
automorphism of an~$\tA_1$ building is type-rotating.

Henceforth, let $\D$ be an~$\tA_1\times\tA_1$ or~$\tA_2$ building.
The subgroup of type-rotating automorphisms will then be of index at
most~$2$ in $\autD$. This follows from the facts that $[D_8:C_2\times C_2]
= [D_6:C_3] =2$, where $C_n$ is the cyclic group and $D_n$ is the
dihedral group of order $n$.  An {\bf apartment} in~$\D$ is a chamber
subcomplex of~$\D$ isomorphic to the corresponding Coxeter
complex. Thus an apartment in~$\D$ is a plane tessellated by squares
in the $\tA_1\times\tA_1$ case and by equilateral triangles in
the~$\tA_2$ case.

Denote by~$\vD$ the vertex set of~$\D$. Any two vertices~$u,v\in\vD$ belong
to a common apartment. The convex hull, in the sense of buildings, between
two vertices $u$ and $v$ is depicted in Figure~\ref{convex hulls}, with
various degeneracies possible.
\refstepcounter{picture}
\begin{figure}[htbp]
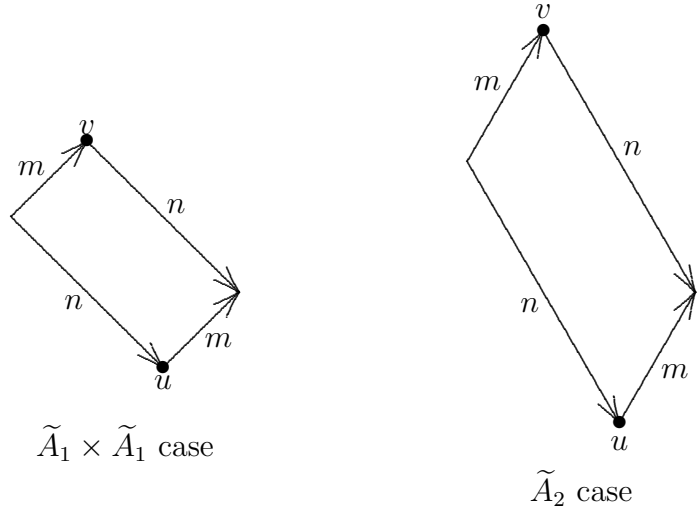
\label{convex hulls}
\hfil
\beginpicture
\setcoordinatesystem units <0.5cm,0.866cm>  point at -6 0  
\setplotarea x from -5 to 5, y from -2.5 to 4.5            
\put{$m$}  [tl]  at  2.1 -1.1
\put{$m$}  [br]  at -2.1  3.1
\put{$n$}  [tr]  at -1.1 -0.1
\put{$n$}  [bl]  at  1.1  2.1
\put{$\bullet$}  at  1   -2
\put{$u$}  [t]   at  1   -2.2
\put{$\bullet$}  at -1    4
\put{$v$}  [b]   at -1    4.2
\arrow <10pt> [.3,.67] from   1    -2   to     3   0
\arrow <10pt> [.3,.67] from  -3     2   to    -1   4
\arrow <10pt> [.3,.67] from  -1     4   to     3   0
\arrow <10pt> [.3,.67] from  -3     2   to     1  -2
\put {$\tA_2$ case}   at   0 -3
\setcoordinatesystem units <0.5cm,0.5cm> point at 6 0   
\setplotarea x from -5 to 5, y from -2.5 to 4.5         
\put{$m$} [tl]   at   2.1 -1.1
\put{$m$} [br]  at -2.1   3.1
\put{$n$}  [tr]   at -1.1 -0.1
\put{$n$}  [bl]   at   1.1   2.1
\put{$\bullet$} at    1   -2
\put{$u$}  [t]    at    1    -2.2
\put{$\bullet$} at   -1   4
\put{$v$}  [b]    at -1      4.2
\arrow <10pt> [.3,.67] from   1    -2   to     3   0
\arrow <10pt> [.3,.67] from  -3     2   to    -1   4
\arrow <10pt> [.3,.67] from  -1     4   to     3   0
\arrow <10pt> [.3,.67] from  -3     2   to     1  -2
\put {$\tA_1\times\tA_1$ case}   at   0 -4
\endpicture
\hfil
\caption{Convex hull of two vertices.}
\end{figure}
Fundamental properties of buildings imply that any apartment containing~$u$
and~$v$ must also contain their convex hull.

Define the {\bf distance}, $d(u,v)$, between $u$ and $v$ to be the
graph theoretic distance on the one-skeleton of~$\D$. Any path from
$u$ to $v$ of length $d(u,v)$ lies in their convex hull, and the union
of the vertices in such paths is exactly the set of vertices in the
convex hull.  Note that although this last statement is true
for~$\tA_1\times\tA_1$ and $\tA_2$~buildings, it does not hold for
arbitrary affine buildings. For example in a $\widetilde G_2$~building
it fails for two vertices whose convex hull contains at least three chambers.

We define the {\bf shape}~$\sh(u,v)$ of the ordered pair of vertices
$(u,v)\in\vD\times\vD$ to be the pair $(m,n)\in\NN\times\NN$ as
indicated in Figure~\ref{convex hulls}.  Note that $d(u,v)=m+n$.  In
the $\tA_2$ case the arrows in Figure~\ref{convex hulls} point in the
direction of cyclically increasing type, i.e.\
$\{\ldots,0,1,2,0,1,\ldots\}$.  In the $\tA_1\times\tA_1$ case the
components of $\sh(u,v)$ indicate the relative contributions to
$d(u,v)$ from the two $\tA_1$ factors.  More specifically, suppose
$\D=\D_1\times\D_2$ where $\D_1$ and $\D_2$ are $\tA_1$ buildings,
i.e.\ trees.  If $v=(v_1,v_2)$ and $w=(w_1,w_2)$ are vertices of $\D$,
the {\bf shape} from $v$ to $w$ is
\[
\sh(v,w)=\left( d(v_1,w_1), d(v_2,w_2)\right)
\]
where $d$ denotes the usual graph-theoretic distance on a tree.
An edge in $\D$ connects the vertices $v$ and $w$ if $\sh(v,w)=(0,1)$ or
$\sh(v,w)=(1,0)$.

\begin{lem}\label{shape decomposition for vertices}
Suppose $m_1,m_2,n_1,n_2\in\NN$ and $\sh(u,w)=(m_1+m_2,n_1+n_2)$ for
vertices $u,w\in\vD$. Then there is a unique vertex $v\in\vD$ such that
\[
\sh(u,v)=(m_1,n_1)\quad\text{ and }\quad \sh(v,w)=(m_2,n_2).
\]
\end{lem}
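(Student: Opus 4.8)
The plan is to work inside a single apartment. By the fundamental property quoted just above the statement, any apartment $\cA$ containing $u$ and $w$ also contains their convex hull, so it suffices to locate $v$ there and then argue uniqueness globally. First I would treat the two building types separately, since in each case the apartment is a concrete Euclidean Coxeter complex and the shape $\sh(u,w)=(m_1+m_2,n_1+n_2)$ has an explicit coordinate description. In the $\tA_1\times\tA_1$ case this is almost immediate: writing $\D=\D_1\times\D_2$ and $u=(u_1,u_2)$, $w=(w_1,w_2)$, one has $d(u_1,w_1)=m_1+m_2$ and $d(u_2,w_2)=n_1+n_2$ in the respective trees; on the geodesic from $u_1$ to $w_1$ there is a unique vertex $v_1$ at distance $m_1$ from $u_1$ (hence $m_2$ from $w_1$), similarly a unique $v_2$ at distance $n_1$ from $u_2$, and $v=(v_1,v_2)$ works. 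Uniqueness follows because any $v=(v_1,v_2)$ with $\sh(u,v)=(m_1,n_1)$ and $\sh(v,w)=(m_2,n_2)$ forces $d(u_1,v_1)=m_1$, $d(v_1,w_1)=m_2$, and additivity $d(u_1,v_1)+d(v_1,w_1)=d(u_1,w_1)$ means $v_1$ lies on the geodesic, pinning it down; likewise for $v_2$.

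For the $\tA_2$ case I would set up coordinates on the apartment so that the convex hull of $u$ and $w$ is the parallelogram shown in Figure~\ref{convex hulls}, with the two edge-directions (the ``$m$'' and ``$n$'' directions) being the relevant sectors emanating from $u$. The shape being $(m_1+m_2,n_1+n_2)$ says $w=u+(m_1+m_2)e+(n_1+n_2)f$ in these coordinates, where $e,f$ are the two unit vectors pointing along the parallelogram's sides in the direction of cyclically increasing type. The candidate is $v=u+m_1 e+n_1 f$; it is a vertex of the apartment, and by inspecting the parallelogram one checks directly that the convex hull of $u$ and $v$ is the sub-parallelogram with dimensions $(m_1,n_1)$ and that of $v$ and $w$ the complementary one with dimensions $(m_2,n_2)$, giving the two required shape equations. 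The key geometric input, already granted in the excerpt, is that $\sh$ is determined by the convex hull and that $d(u,v)=m+n$ whenever $\sh(u,v)=(m,n)$.

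The main obstacle is \emph{uniqueness}, and more precisely arguing that a vertex $v$ satisfying the two shape conditions must lie in the convex hull of $u$ and $w$ — a priori $v$ need not be in the apartment $\cA$ at all. Here the essential tool is the additivity of distance: $\sh(u,v)=(m_1,n_1)$ gives $d(u,v)=m_1+n_1$, $\sh(v,w)=(m_2,n_2)$ gives $d(v,w)=m_2+n_2$, so $d(u,v)+d(v,w)=m_1+m_2+n_1+n_2=d(u,w)$, i.e. $v$ lies on a geodesic from $u$ to $w$. By the result quoted in the paragraph before the lemma — for these two building types, the vertices on length-$d(u,w)$ paths from $u$ to $w$ are exactly the vertices of the convex hull — $v$ is a vertex of the convex hull, hence of $\cA$. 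Now the problem is reduced to the apartment, where the coordinate description above shows $v=u+m_1e+n_1f$ is forced: the $\tA_2$ convex hull of $u$ and $v$ can have shape $(m_1,n_1)$ for only one lattice point $v$ on the $u$-to-$w$ geodesic, and similarly in the product case. I would also record the small point that different apartments through $u$ and $w$ all contain the same convex hull and induce the same coordinates on it (up to the symmetry of the parallelogram, which does not affect which vertex is picked out), so the answer does not depend on the choice of $\cA$.
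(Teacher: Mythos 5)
Your proposal is correct and follows essentially the same route as the paper: the key step in both is that $\sh(u,v)=(m_1,n_1)$ and $\sh(v,w)=(m_2,n_2)$ force $d(u,v)+d(v,w)=d(u,w)$, so $v$ lies on a geodesic and hence (by the fact recorded just before the lemma) in the convex hull of $u$ and $w$, where existence and uniqueness are immediate. The paper simply leaves the in-hull verification as ``clear,'' whereas you spell it out in coordinates for the two building types; no substantive difference.
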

\begin{proof}
Such a ~$v\in\vD$ satisfies $d(u,w)=d(u,v)+d(v,w)$ so it must lie in the
convex hull of~$u$ and~$w$. Inside the convex hull existence and uniqueness
of~$v$ are clear.
\end{proof}

It is a direct consequence of the definitions that every type-rotating
automorphism~$a\in\autD$ preserves shape in the sense that
$\sh(au,av)=\sh(u,v)$ for all~$u,v\in\vD$. Furthermore, if~$b\in\autD$
is not type-rotating then $\sh(bu,bv)=(n,m)$ whenever
$\sh(u,v)=(m,n)$. Thus an element~$a\in\autD$ preserves shape if and
only it is type-rotating. Thus the group $\shaut$ of shape-preserving
automorphisms coincides with the group of type-rotating automorphisms.

We refer to Section \ref{simply transitive actions}
 for a more detailed analysis of the interesting
special case where the group~$\G\leq\shaut$ acts simply transitively on~$\vD$.

\subsection{Free actions and groupoids.}\label{free actions}

Suppose now that~$\G\leq\shaut$ acts freely, but not necessarily
transitively, on the vertices~$\vD$ of~$\D$.
This induces an action of~$\G$ on~$\vD\times\vD$.
Denote by $[u,v]$ the orbit of the ordered pair $(u,v)\in\vD\times\vD$
under the left action of~$\G$. Define
\[
\gpd = \G\backslash(\vD\times\vD),
\]
the set of $\G$-orbits of~$\vD\times\vD$.
Given $[u,v]$ and $[v,w]\in\gpd$, define their product as
\[
[u,v][v,w]=[u,w].
\]
The product of $[u,v]$ and $[x,y]$ is not always defined. It exists only
if~$v$ and~$x$ lie in the same~$\G$-orbit, and in that case it is well
defined because~$\G$ acts freely on~$\vD$. Specifically, if~$bx=v$
for~$b\in\G$, then
\[
[u,v][x,y]=[u,v][bx,by]=[u,by].
\]
The set~$\gpd$ with this product satisfies the axioms of a groupoid,
see~\cite[Chapter~II.5]{Connes}:

\begin{defn}
 A {\bf groupoid} consists of a set $\cG$, a distinguished subset
$\cG^{(0)} \subset \cG$,
two maps $s,r: \cG \longrightarrow \cG^{(0)}$ and a law of composition
$(\alpha_1,\alpha_2) \in \cG^{(2)} \mapsto  \alpha_1\alpha_2 \in \cG$, with
domain
$\cG^{(2)} = \{(\alpha_1,\alpha_2) \in \cG \times \cG : s(\alpha_1) =
r(\alpha_2)\}$,
such that
\begin{itemize}
\item $s(\alpha_1\alpha_2) = s(\alpha_2)$, $r(\alpha_1\alpha_2) =
r(\alpha_1)$, for all $(\alpha_1,\alpha_2) \in \cG^{(2)}$;
\item $s(\alpha) = r(\alpha) = \alpha$ , for all $\alpha \in \cG^{(0)}$;
\item $\alpha s(\alpha) = \alpha$ , $r(\alpha)\alpha = \alpha$, for all $\alpha \in \cG$;
\item $(\alpha_1\alpha_2)\alpha_3 = \alpha_1(\alpha_2\alpha_3)$;
\item each $\alpha$ has a two-sided inverse $\alpha^{-1}$, with
$\alpha\alpha^{-1} = r(\alpha)$,
   $\alpha^{-1}\alpha = s(\alpha)$.
\end{itemize}
\end{defn}
The maps $r$ and $s$ are called the {\bf range} and {\bf source}
maps. An element $\iota \in \cG^{(0)}$ is called a {\bf unit}.
The units in~$\gpd$ are of the form~$[v,v]$ for~$v\in\vD$. If
$\alpha=[u,v]\in\gpd$ then $r(\alpha)=[u,u]$, $s(\alpha)=[v,v]$
and~$\alpha^{-1}=[v,u]$.

\begin{defn}
Let~$\cG$ be a groupoid and~$X$ be a set together with a surjection
$\sX : X \to \cG^{(0)}$.
Form the fibred product
\begin{equation*}
X*\cG = \{(v,\alpha) \in X \times \cG\ \vert \ \sX(v)=r(\alpha)\}
\end{equation*}
A (right) {\bf groupoid action} of~$\cG$ on~$X$ is a map
$(v,\alpha)\mapsto v\alpha\in X$ from $X*\cG$ to $X$ such that
\begin{itemize}
\item $\sX(v\alpha)=s(\alpha)$;
\item $v(\alpha\beta)= (v\alpha)\beta$, whenever $(v,\alpha)\in X*\cG$
and $(\alpha,\beta)\in \cG^{(2)}$;
\item $v\sX(v)=v$ for all $v \in X$.
\end{itemize}
We think of
$\sX$ as being a `generalized source' map for the action.
A groupoid action is called {\bf simply transitive} if in addition
\begin{itemize}
\item given $v,w\in X$ there exists a unique $\alpha\in\cG$ such that
$(v,\alpha) \in X*\cG$ and $v\alpha=w$.
\end{itemize}
\end{defn}

We refer the reader to~\cite{ren} and~\cite[Section~2]{MW} for details on
groupoid
actions.
When~$\cG$ acts simply transitively on~$X$ and~$v,w\in X$, we write $v^{-1}w$
for the unique $\alpha\in\cG$ such that $v\alpha=w$. This notation is possibly
misleading since $v^{-1}$ has no independent existence. However, if
$(v,\beta)$, $(w,\gamma) \in X*\cG$ then
\[
(v\beta)^{-1}(w\gamma)= \beta^{-1}(v^{-1}w)\gamma
\]
as is easily checked.

We define a simply transitive groupoid action of~$\gpd$ on~$\vD$ via
\[
v[v,w]=w.
\]
Thus $\sX(v)=[v,v]$ and 
$(u,[v,w]) \in X*\cG$  exactly when~$u$ and~$v$ lie in the
same $\G$-orbit.
If 
$v,w\in\vD$ then $v\alpha=w$ if and only if $\alpha=[v,w]$
so that~$v^{-1}w=[v,w]$.

\begin{defn}
Let~$\D$ be an~$\tA_1\times\tA_1$ or~$\tA_2$ building, $\vD$ its vertex set
and $\sh_\D$ its shape function. Let~$\cG$ be a groupoid acting on~$\vD$ on
the right. A {\bf shape function} on~$\cG$ compatible with the action is a map
$\sh_\cG:\cG\to\NN\times\NN$ such that
\[
\sh_\D(v,v\alpha)=\sh_\cG(\alpha)
\]
whenever $v\in\vD,\alpha\in\cG$ and $v\alpha$ is defined.
\end{defn}

Henceforth we will omit the subscripts and denote all shape functions by~$\sh$.
A shape function on $\gpd$ compatible with the action of~$\gpd$
on~$\D$ is given by
\[
\sh([u,v])=\sh(u,v)
\]
for any $u,v\in\vD$. This is well-defined since~$\G\leq\shaut$ acts
on~$\D$ by shape-preserving automorphisms.

\begin{lem}\label{shape decomposition for groupoid}
Let~$\D$ be an $\tA_1\times\tA_1$ or~$\tA_2$ building. Let~$\cG$ be a
groupoid endowed with a simply transitive action on~$\vD$ and a
compatible shape function. Let~$m_1,m_2,n_1,n_2\in\NN$. Whenever
$\alpha\in\cG$ has shape $(m_1+m_2,n_1+n_2)$ there exist unique
$\beta,\gamma\in\cG$ such that
\[
\alpha=\beta\gamma, \quad\quad \sh(\beta)=(m_1,n_1)
\quad\text{ and }\quad \sh(\gamma)=(m_2,n_2).
\]
\end{lem}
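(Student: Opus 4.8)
The plan is to reduce the groupoid statement to Lemma~\ref{shape decomposition for vertices}, which handles the analogous claim for vertices of~$\D$, by using the simply transitive action to transfer everything down to a single fibre. First I would pick any vertex $v\in\vD$ with $\sh(v)$-source matching $r(\alpha)$, i.e.\ any~$v$ for which $v\alpha$ is defined; such a~$v$ exists because the action is transitive on each orbit. Set $u=v$ and $w=v\alpha$. Then the compatibility of the shape function gives $\sh(v,w)=\sh(\alpha)=(m_1+m_2,n_1+n_2)$, so Lemma~\ref{shape decomposition for vertices} supplies a unique vertex~$x\in\vD$ with $\sh(v,x)=(m_1,n_1)$ and $\sh(x,w)=(m_2,n_2)$.

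Next I would define $\beta=v^{-1}x$ and $\gamma=x^{-1}w$, using the notation for the unique groupoid element carrying one vertex to another under a simply transitive action. Since $v\beta=x$ and $x\gamma=w$, associativity of the action gives $v(\beta\gamma)=(v\beta)\gamma=x\gamma=w=v\alpha$, and then the uniqueness clause in the definition of simply transitive action forces $\beta\gamma=\alpha$. Compatibility of the shape function applied to $v\beta=x$ and $x\gamma=w$ yields $\sh(\beta)=\sh(v,x)=(m_1,n_1)$ and $\sh(\gamma)=\sh(x,w)=(m_2,n_2)$, so existence is done.

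For uniqueness, suppose $\alpha=\beta'\gamma'$ with $\sh(\beta')=(m_1,n_1)$ and $\sh(\gamma')=(m_2,n_2)$. I would set $x'=v\beta'$; this is defined because $r(\beta')=r(\beta'\gamma')=r(\alpha)=\sX(v)$, and then $x'\gamma'=v\alpha=w$ is defined as well. Compatibility gives $\sh(v,x')=(m_1,n_1)$ and $\sh(x',w)=(m_2,n_2)$, so by the uniqueness of~$x$ in Lemma~\ref{shape decomposition for vertices} we get $x'=x$. Hence $v\beta'=x=v\beta$, and the uniqueness clause of the simply transitive action forces $\beta'=\beta$; then $\beta\gamma'=\beta\gamma$ and cancelling on the left (multiply by $\beta^{-1}$, legitimate since $r(\gamma')=s(\beta)=r(\gamma)$) gives $\gamma'=\gamma$.

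I do not anticipate a genuine obstacle here; the content is entirely in Lemma~\ref{shape decomposition for vertices}, and the remaining work is bookkeeping with the groupoid axioms. The one point requiring a little care is checking that all the composites and actions written down are actually defined --- that the source/range conditions line up --- but these follow routinely from $r(\beta'\gamma')=r(\beta')$, $s(\beta'\gamma')=s(\gamma')$, and the compatibility of~$\sX$ with the action. A secondary subtlety is that the choice of starting vertex~$v$ is not canonical, so I would remark that the resulting $\beta,\gamma$ are manifestly independent of~$v$ since they are characterized by $\alpha=\beta\gamma$ together with their prescribed shapes, which is exactly the uniqueness statement just proved.
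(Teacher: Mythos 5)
Your proof is correct and follows essentially the same route as the paper: fix a vertex $u$ with $w=u\alpha$ defined, translate the factorization problem into finding the intermediate vertex via Lemma~\ref{shape decomposition for vertices}, and recover $(\beta,\gamma)=(u^{-1}v,v^{-1}w)$ with uniqueness transferred back through the simply transitive action. The extra bookkeeping you do on source/range conditions is implicit in the paper's shorter argument but adds nothing essentially different.
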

\begin{proof}
Fix $u\in\vD$ such that $w=u\alpha$ exists. A pair $(\beta,\gamma)$
satisfies the required conditions if and only if
\[
\sh(\beta)=(m_1,n_1), \quad\quad \sh(\beta^{-1}\alpha)=(m_2,n_2)
\quad\text{ and }\quad \gamma=\beta^{-1}\alpha.
\]
This happens if and only if~$v=u\beta$ satisfies
\[
\sh(u,v)=(m_1,n_1),\quad\text{ and }\quad\sh(v,w)=(m_2,n_2).
\]
By Lemma~\ref{shape decomposition for vertices}, there is exactly one
such vertex~$v$. Hence the pair $(\beta,\gamma)=(u^{-1}v,v^{-1}w)$
uniquely satisfies the required conditions.
\end{proof}

To summarize, any group~$\G\leq\shaut$ with a free left action on~$\vD$ gives
rise to a groupoid~$\gpd$ with a simply transitive (right) groupoid action
on~$\vD$ and a compatible shape function. The actions of~$\G$
and~$\gpd$ commute in the sense that
\[
c(v\alpha)=(cv)\alpha
\]
for $c\in\G$, $v\in\vD$ and $\alpha\in\gpd$ provided one side or the
other exist. We will call~$\gpd$ the {\bf commutant groupoid} of~$\G$.
Note that the left action of~$\G$ on~$\vD$ preserves the building
structure and that the right action of~$\gpd$ on~$\vD$ does not.

If, as in~\S\ref{simply transitive actions},~$\G$ acts simply transitively
on~$\vD$, then~$\gpd$ is a group isomorphic to~$\G$ and the groupoid action
of~$\gpd$ on~$\vD$ constructed in the above manner is equivalent to a
right group action of~$\G$ on~$\vD$. For any fixed~$v_0\in\vD$, the map
$c\mapsto[v_0,cv_0]$ gives an isomorphism $\G\to\gpd$. This isomorphism
depends on the choice of~$v_0$, varying up to inner automorphisms of~$\G$.
Our results will be phrased in the language of groupoid actions
with implications for free actions of groups on~$\vD$ via the above
construction.

\subsection{Convolutions}\label{convolutions}

If $\cG$~is a groupoid, let $\CC\cG$ denote the space of
finitely supported complex valued functions on~$\cG$.  The
convolution product on~$\CC\cG$ defined by
\[
(g_1*g_2)(\gamma)=\sum_{\alpha\beta=\gamma} g_1(\alpha)g_2(\beta)
\]
makes~$\CC\cG$ into an associative algebra.  Suppose that
$\cG$~acts simply transitively on a set~$X$.  For $g\in\CC\cG$,
define $\rho(g)\colon\ell^2(X)\to\ell^2(X)$ by
\[
(\rho(g)f)(v)=(f*g)(v)=
\sum_{\substack{u\in X \\ u\alpha=v}} f(u)g(\alpha) .
\]
It follows from the simple transitivity of the action
of~$\cG$ that~$\rho(g)f$ actually lies in~$\ell^2(X)$; indeed that
$\|f*g\|_2\leq \|f\|_2 \|g\|_1$.  The map~$\rho$ defines a right
action of~$\CC\cG$ on~$\ell^2(X)$.

Let $\D$~be an $\tA_1\times\tA_1$ or~$\tA_2$ building and let
\begin{equation}\label{polys}
p(m,n)=
\begin{cases}
(m+1)(n+1) & \text{($\tA_1\times\tA_1$ case)} \\
1/2(m+1)(n+1)(m+n+2)\sqrt{\max(m,n)+1} &
   \text{($\tA_2$ case).}
\end{cases}
\end{equation}
In Sections~\ref{A1 case} and~\ref{A2 case} we prove the
following result:

\begin{thm}\label{gpd-thm}
Suppose $\cG$~is a groupoid acting simply
transitively on~$\vD$ and $\sh\colon\cG\to\NN$ is a shape
function compatible with the action.  Fix any
$(m,n)\in\NN\times\NN$.  If $g\in\CC\cG$ is supported on
elements of shape~$(m,n)$, then $\|\rho(g)\|\leq p(m,n)\|g\|_2$.
\end{thm}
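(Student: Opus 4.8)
The strategy is to decompose a shape-$(m,n)$ element into a product of "elementary" pieces and estimate each piece separately, combining via Lemma~\ref{B}. Concretely, fix $g\in\CC\cG$ supported on shape $(m,n)$. The operator $\rho(g)$ acts on $\ell^2(\vD)$, and $\vD$ splits orthogonally according to type (in the $\tA_2$ case, the type $\tau(v)\in\ZZ/3\ZZ$; in the $\tA_1\times\tA_1$ case the bidegree). More usefully, I would decompose $\ell^2(\vD)=\bigoplus_k\ell^2(\vD_k)$ where $\vD_k$ is, say, the set of vertices at a fixed distance or fixed shape from a reference sphere; the block $T_{kj}$ of $\rho(g)$ then corresponds to the part of $g$ carrying shape-$(m,n)$ geodesics from the $j$-th layer to the $k$-th layer. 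Using Lemma~\ref{shape decomposition for groupoid} repeatedly, write each shape-$(m,n)$ element $\alpha=\beta_1\beta_2\cdots$ as a product of steps of shape $(1,0)$ or $(0,1)$ (in the $\tA_1\times\tA_1$ case) or shape $(1,0)$, $(0,1)$ (in the $\tA_2$ case, along the two wall directions of the convex hull). This reduces matters to controlling convolution by functions supported on single-step elements, for which the bound is elementary, and then counting the number of ways a shape-$(m,n)$ geodesic decomposes — this count is exactly what produces the combinatorial factor $p(m,n)$.

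**Key steps.** First I would handle the $\tA_1\times\tA_1$ case, which is essentially a product of two tree cases: since $\D=\D_1\times\D_2$, a shape-$(m,n)$ element of $\cG$ "is" a shape-$m$ element in the first tree groupoid tensored with a shape-$n$ element in the second, and $\ell^2(\vD)=\ell^2(\vD_1)\otimes\ell^2(\vD_2)$. Haagerup's original argument (as restated geometrically in the introduction) gives $\|\rho(g_1)\|\le(m+1)\|g_1\|_2$ for the first factor and $\|\rho(g_2)\|\le(n+1)\|g_2\|_2$ for the second; a tensor/Cauchy–Schwarz argument — decompose $g$ over the finitely many $(\text{shape-}m,\text{shape-}n)$ pairs and apply Lemma~\ref{B} — then yields $\|\rho(g)\|\le(m+1)(n+1)\|g\|_2=p(m,n)\|g\|_2$. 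Second, for the $\tA_2$ case I would set up the layering so that the block structure respects "distance from $v$", decompose $g$ according to the finitely many shapes $(m,n)$ with $m+n$ fixed that can appear between adjacent layers, bound each resulting single-layer-transition operator by its $\ell^2$ norm times a small constant using the geometry of $\tA_2$ apartments (each vertex has boundedly many neighbors of a given shape), and then invoke Lemma~\ref{B} to sum the squares. The extra factor $\sqrt{\max(m,n)+1}$ over the "conjectured optimal" $\tfrac12(m+1)(n+1)(m+n+2)$ comes precisely from the slack in this square-summing step — this is "the one new ingredient" the introduction mentions.

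**Main obstacle.** The hard part is the $\tA_2$ case: unlike the product case, a geodesic of shape $(m,n)$ in an $\tA_2$ building does not factor through independent coordinates, so one must genuinely count lattice paths in the triangular convex hull and track how the two "wall directions" interleave. The combinatorial bookkeeping — identifying the right layers $\vD_k$, verifying that the off-diagonal blocks $T_{kj}$ vanish except for $|k-j|$ small, and getting the norm of each block down to something proportional to its Hilbert–Schmidt-type weight — is where all the work lies, and it is what forces the extra $\sqrt{\max(m,n)+1}$ into $p(m,n)$. I expect the proof (carried out in Sections~\ref{A1 case} and~\ref{A2 case}) to isolate a single structural lemma about $\tA_2$ apartments — roughly, that the convex hull of shape $(m,n)$ is rigid enough that the path-counting reduces to a one-dimensional estimate — after which both cases follow by the Lemma~\ref{B}/Lemma~\ref{C} machinery already set up.
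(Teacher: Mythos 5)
There is a genuine gap, in both halves of the argument. For the $\tA_1\times\tA_1$ case you reduce to ``a shape-$m$ element in the first tree groupoid tensored with a shape-$n$ element in the second,'' but the theorem concerns an arbitrary groupoid $\cG$ with a simply transitive action on $\vD$ and a compatible shape function, and $\cG$ need not factor through the two tree factors at all: although each shape-preserving automorphism of $\D=\D_1\times\D_2$ is a product $a_1\times a_2$ (\S\ref{simply transitive A1}), the group $\G$ (hence its commutant groupoid) need not be a product of tree groups --- the Mozes/Burger--Mozes examples cited there are precisely of this kind and are among the main targets of the theorem. Consequently $g$ does not split as $g_1\otimes g_2$ and $\rho(g)$ is not a tensor product, so Haagerup's tree inequality cannot be applied factorwise; a geometric substitute is required. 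The paper's substitute is the retraction $r$ centred at a boundary point $\w_0$: the folding diagram of the convex hull $\Pmn(x,y)$ is classified by the position of its focal point ($(m+1)(n+1)$ possibilities), one writes $\rho(g)=\sum_D T_D$ over these diagrams, and each $T_D$ is block diagonal over $z\in\vD$ with blocks of Hilbert--Schmidt norm at most $\|g\|_2$, via Lemma~\ref{shape decomposition for groupoid}. Nothing in your outline plays this role, and your ``layer'' decomposition is never made precise enough to see why a block would be controlled by $\|g\|_2$ rather than by a Schur-type bound involving the local data of the building. Note also that counting decompositions of a shape-$(m,n)$ element into $(1,0)$/$(0,1)$ steps gives a binomial coefficient, not the polynomial $(m+1)(n+1)$; the correct count is of focal-point positions, a different object.

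The $\tA_2$ half has the same problem more acutely. Your estimate for a ``single-layer-transition operator'' invokes ``each vertex has boundedly many neighbours of a given shape''; that bound is the thickness $q+1$ of the building, and any argument using it would contaminate $p(m,n)$ with $q$, whereas the stated constant is independent of the building. Moreover the factor $\sqrt{\max(m,n)+1}$ does not arise from slack in Lemma~\ref{B}: in the paper it comes from a genuinely new trilinear estimate --- for $f_1,f_2,f_3\in\CC\Wp$, the sum of $f_1(\alpha)f_2(\beta)f_3(\gamma)$ over triples $(\alpha,\beta,\gamma)\in\cT_p$ (configurations forming an equilateral triangle of side $p$ pointing at $\w_0$) is at most $\sqrt{p+1}\,\|f_1\|_2\,\|f_2\|_2\,\|f_3\|_2$ --- applied to each of the $\frac{1}{2}(m+1)(n+1)(m+n)$ diagrams with two distinct focal points, while the remaining $(m+1)(n+1)$ diagrams are handled exactly as in the $\tA_1\times\tA_1$ case. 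Your proposal is missing both the central organizing idea (the retraction and the classification of foldings by focal points) and this triangle lemma, and as described its steps would not yield the stated bound.
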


\noindent Equivalently, the conclusion asserts that
\[
\|f*g\|_2 \leq p(m,n) \|f\|_2\|g\|_2 .
\]
We can and will restrict to nonnegative and finitely supported~$f$
and~$g$ when proving this.

If $\G$ is a discrete group, we define convolution between
$f\in\ell^2(\G)$ and~$g\in\CC\G$ in the usual way:
\[
(f*g)(c)=\sum_{ab=c} f(a)g(b) .
\]
Theorem~\ref{gp-cor} is a consequence of Theorem~\ref{gpd-thm}.

\subsection*{Proof of Theorem~\ref{gp-cor}}
Let $\gpd$~be the commutant groupoid of~$\G$, as defined
in~\S\ref{free actions}.
Given~$f\in\ell^2(\G)$ and $g\in\CC\G$,
define $f'\in\ell^2(\vD)$ and $g'\in\CC\gpd$ by
\[
\begin{array}{rcll}
f'(cv_0) &=& f(c) &\text{ for $c\in\G$} \\
f'(w) &=& 0 &\text{ unless $w\in\G v_0$} \\
g'([dv_0,cv_0]) &=& g(d^{-1}c) &\text{ for $c$, $d\in\G$}\\
g'([w_1,w_2]) &=& 0 &\text{ unless $w_1$, $w_2\in\G v_0$.}
\end{array}
\]
It is then immediate that $f'*g'$ is related to $f*g$ via
\[
\begin{array}{rcll}
(f'*g')(cv_0) &=& (f*g)(c) &\text{ for $c\in\G$} \\
(f'*g')(w) &=& 0 &\text{ unless $w\in\G v_0.$}
\end{array}
\]
Moreover, $g'$ is supported on elements of~$\gpd$ of shape~$(m,n)$,
$\|g'\|_2=\|g\|_2$, and $\|f'\|_2=\|f\|_2$. Now apply
Theorem~\ref{gpd-thm} to the groupoid~$\gpd$ and the functions~$f'$
and~$g'$ to complete the proof of Theorem~\ref{gp-cor}. \hfill $\qedsymbol$
\vspace{4mm}

Recall from~\cite{Jo} that a group~$G$ has {\bf property~(RD)} if
there is a length function~$L$ on~$G$ such that any function on~$G$
which is rapidly decreasing relative to~$L$ belongs to the
reduced~$C^*$-algebra of~$G$.

\begin{cor}\label{RD-cor}
Any group $\G\leq\autD$ which acts freely on~$\vD$ has property~(RD).
\end{cor}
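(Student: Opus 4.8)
The plan is to deduce Corollary~\ref{RD-cor} from Theorem~\ref{gp-cor} by a routine packaging argument: the entire analytic content sits in Theorem~\ref{gp-cor}, and what remains is to reduce from $\autD$ to the shape--preserving subgroup and then to reorganize the shape--by--shape operator bounds into a rapid--decay estimate. First I would reduce to the case $\G\le\shaut$. The subgroup $\shaut$ has index $1$ or $2$ in $\autD$, so $H=\G\cap\shaut$ has index at most~$2$ in $\G$ and still acts freely on $\vD$; since property~(RD) passes from a finite-index subgroup to the ambient group (one of Jolissaint's permanence results, see~\cite{Jo}), it suffices to show that $H$ has property~(RD). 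So assume henceforth $\G\le\shaut$. (Alternatively one can run the argument below with the \emph{unordered} shape $\{m,n\}$, which is preserved by all of $\autD$ because $p(m,n)=p(n,m)$ in both cases; but appealing to the finite-extension stability of (RD) is shorter.)

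Next I would set up the length function. Fix $v_0\in\vD$ and put $L(c)=d(v_0,cv_0)$. Since every element of $\autD$ acts isometrically on the one-skeleton of~$\D$, $L$ is a length function: $L(1)=0$, $L(c)=L(c^{-1})$, and $L(cd)\le L(c)+L(d)$; it is proper when $\D$ is locally finite, because freeness makes $c\mapsto cv_0$ injective. Given $g\in\ell^2(\G)$, write $g=\sum_{(m,n)\in\NN^2}g_{m,n}$, where $g_{m,n}$ is the restriction of $g$ to words of shape $(m,n)$. By Theorem~\ref{gp-cor}, $\|\rho(g_{m,n})\|\le p(m,n)\,\|g_{m,n}\|_2$ when $g_{m,n}$ is finitely supported; approximating an arbitrary $\ell^2$ function supported on shape $(m,n)$ by finitely supported ones---on which $\rho$ is $p(m,n)$-Lipschitz into the operator norm---shows that the bound persists and that each such $\rho(g_{m,n})$ lies in $C^*_r(\G)$.

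Finally I would assemble the estimate. The key elementary point is that $p(m,n)$ is polynomially bounded in $\ell:=m+n$: if $m+n=\ell$ then $(m+1)(n+1)\le(\ell+1)^2$, hence $p(m,n)\le(\ell+1)^2$ in the $\tA_1\times\tA_1$ case and $p(m,n)\le\tfrac12(\ell+1)^2(\ell+2)\sqrt{\ell+1}\le(\ell+1)^4$ in the $\tA_2$ case; so $\max_{m+n=\ell}p(m,n)\le(\ell+1)^4$ in both cases. Let $g_\ell=\sum_{m+n=\ell}g_{m,n}$ be the restriction of $g$ to the sphere $\{L=\ell\}$, so $\|g_\ell\|_2^2=\sum_{m+n=\ell}\|g_{m,n}\|_2^2$. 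Using subadditivity of the operator norm, the per-shape bound, the fact that exactly $\ell+1$ pairs satisfy $m+n=\ell$, and two applications of the Cauchy--Schwarz inequality,
\[
\|\rho(g)\|\;\le\;\sum_{m,n}p(m,n)\,\|g_{m,n}\|_2
\;\le\;\sum_{\ell\ge0}(\ell+1)^{5}\,\|g_\ell\|_2
\;\le\;\Bigl(\sum_{\ell\ge0}(\ell+1)^{-2}\Bigr)^{1/2}\,\bigl\|(1+L)^{6}g\bigr\|_2 .
\]
When $g$ is rapidly decreasing relative to $L$, the right-hand side is finite, so $\rho(g)$ is bounded; moreover the same chain gives $\sum_{m,n}\|\rho(g_{m,n})\|<\infty$, so $\rho(g)=\sum_{m,n}\rho(g_{m,n})$ converges in operator norm and hence lies in $C^*_r(\G)$ since each summand does. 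Thus every function rapidly decreasing with respect to $L$ belongs to $C^*_r(\G)$, which is precisely property~(RD) for $H$, and therefore for $\G$. I do not expect a real obstacle here: the only mildly delicate points are the finite-index reduction and the polynomial domination of $p(m,n)$, both immediate, and all the difficulty lives in Theorem~\ref{gp-cor} (equivalently Theorem~\ref{gpd-thm}).
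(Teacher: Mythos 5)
Your proposal is correct and follows essentially the same route as the paper: reduce to $\G\cap\shaut$ via the index-at-most-$2$ observation and Jolissaint's finite-index permanence, use the length $|c|=d(v_0,cv_0)$, note that each length $\ell$ corresponds to only $\ell+1$ shapes and that $p(m,n)$ is polynomially bounded in $m+n$, and then sum the shape-by-shape bounds from Theorem~\ref{gp-cor}. The only difference is that you write out explicitly the final Cauchy--Schwarz assembly, which the paper delegates to the standard argument in Connes or Haagerup.
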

\begin{proof}
The index of $\shaut$ in $\autD$ is at most~$2$.  Hence the index of
$\G\cap\shaut$ in $\G$ is at most~$2$.  As property~(RD) holds for a
group whenever it holds for some subgroup of finite
index~\cite[Proposition~2.1.5]{Jo}, we may assume that $\G\leq\shaut$.

Fix $v_0\in\vD$ and define $\sh(c)=\sh(v_0,cv_0)$ for~$c\in\G$.
Likewise, define
\[
|c|=d(v_0,cv_0) .
\]
Then $|c|$ is a length function on~$\G$, in the sense of~\cite{Jo}.
When $\sh(c)=(m,n)$ one has $|c|=m+n$.  To derive property~(RD) from
Theorem~\ref{gp-cor} we note that a given value of
$|c|$ corresponds to only $|c|+1$ different shapes, and that the
relevant function~$p(m,n)$ from~(\ref{polys}) can be bounded by a
polynomial in~$m+n$. Now argue as in~\cite[Chapter~III.5, Theorem~5]{Connes}
or~\cite[Lemma~1.5]{Hag}.
\end{proof}

\begin{remark}
Corollary~\ref{RD-cor} implies property~(RD) for any discrete subgroup
of $\autD$ which has a torsion-free subgroup of finite index. As a consequence,
if $\FF$ is a finite extension of $\QQ_p$ and $\Gamma$ is a finitely
generated discrete
subgroup of ${\text{\rm{SL}}}_3(\FF)$, then it follows from Selberg's Lemma
\cite{sel} that $\Gamma$ has a torsion free finite index subgroup,
and so satisfies property (RD).
\end{remark}

\subsection{The retraction centred at a boundary point of the building}

The proof of Theorem~\ref{gpd-thm} relies heavily on the notion of the
retraction of a building centred at a boundary point. We outline a
construction based on~\cite[page 171]{Brown}.  There is also a concise
description in~\cite[\S9.3]{Ronan}.  An infinite straight line of
edges in an apartment is called a {\bf wall}.  Fix an apartment~$\cA$
of~$\D$. Consider two half-planes in~$\cA$ bounded by intersecting
walls and measure the angle between those two walls through the
intersection of the two half-planes.  If that angle is nonzero and
minimal, we call the intersection of the two half-planes a sector.  A
{\bf sector} in~$\D$ is by definition a sector in some apartment
of~$\D$. The boundary,~$\Om$, of~$\D$ is the set of sectors in $\D$
under the equivalence relation that $\cS\sim\cS'$ if $\cS\cap\cS'$
contains a sector. (This is the analogue in higher rank of the notion
of tail equivalence for semi-infinite paths in trees.)  Given a vertex
$v\in\vD$ and a boundary point $\w\in\Om$ there is a unique sector
based at~$v$ representing~$\w$, denoted~$\cS_v(\w)$.  Thus~$\Om$ can
be identified with the set of sectors emanating from any fixed vertex
in~$\D$.

Fix an apartment $\cA_0$ of~$\D$ and a sector~$\cS_0$ in~$\cA_0$ representing a
boundary point $\w_0\in\Om$. Then~$\D$ is a union of apartments~$\cA'$ each of
which contains a subsector of~$\cS_0$. There is a retraction
$r:\D\longrightarrow \cA_0$ which is a contraction, in the sense that
$d(r(u),r(v))\leq d(u,v)$ for any $u,v\in\vD$, and whose restriction to any
such~$\cA'$ is an isomorphism from~$\cA'$ onto~$\cA_0$ which fixes
$\cA'\cap\cA_0$ pointwise. In particular, given any $v\in\vD$, $r(v)$ is
independent of the apartment~$\cA'$ chosen such that $v\in\cA'$.

The boundary point~$\w_0$ defines a preferred direction in~$\cA_0$ which we
shall call {\bf up}. Each chamber in $\cA_0$ may therefore be labelled with
an arrow pointing up, as in Figure~\ref{arrows up}.
\refstepcounter{picture}
\begin{figure}[htbp]
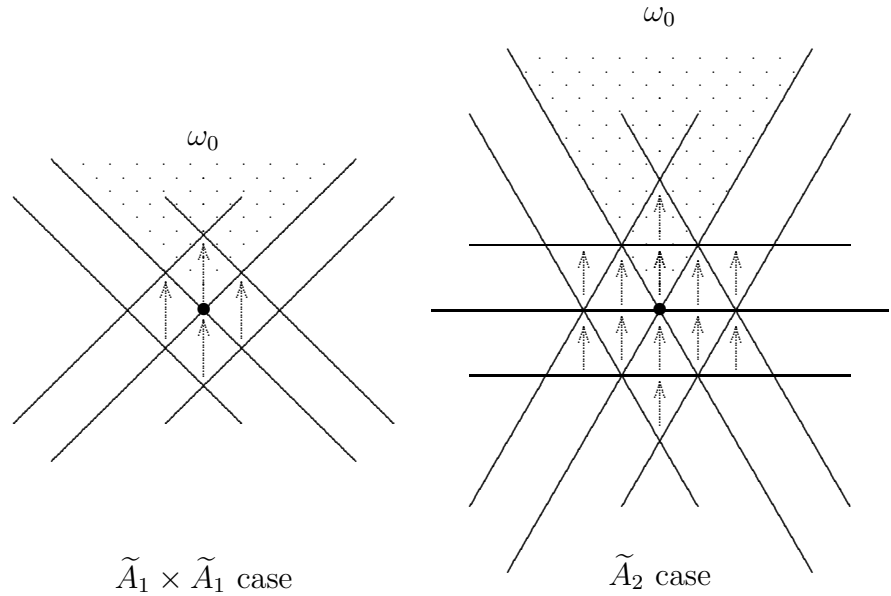
\label{arrows up}
\hfil
\beginpicture
\setcoordinatesystem units <0.5cm,0.866cm>  point at -6 0  
\setplotarea x from -5 to 5, y from -4 to 4   
\put {$\bullet$}      at   0  0
\put {$\w_0$}         at   0  4.5
\put {$\tA_2$ case}   at   0 -4
\putrule from -6    0     to  6    0
\putrule from -5    1     to  5    1
\putrule from -5  -1     to  5  -1
\setlinear
\plot -5  3   1 -3 /
\plot -4  4   4 -4 /
\plot -1  3   5 -3 /
\plot -5 -3   1  3 /
\plot -4 -4   4  4 /
\plot -1 -3   5  3 /
\vshade -4 4 4   <,z,z,z>    0 0 4   <z,z,z,>  4 4 4 /
\setdots <1pt>
\arrow <6pt> [.3,.67] from   0   0.25 to   0   0.9
\arrow <6pt> [.3,.67] from -1 -0.75 to -1 -0.1
\arrow <6pt> [.3,.67] from   1 -0.75 to   1 -0.1
\arrow <6pt> [.3,.67] from   0   0.25 to   0   0.9
\arrow <6pt> [.3,.67] from   0 -1.75 to   0 -1.1
\arrow <6pt> [.3,.67] from -2   0.25 to -2   0.9
\arrow <6pt> [.3,.67] from   2   0.25 to   2   0.9
\arrow <6pt> [.3,.67] from -1   0.1   to  -1   0.75
\arrow <6pt> [.3,.67] from   1   0.1   to    1   0.75
\arrow <6pt> [.3,.67] from   0 -0.9   to    0 -0.25
\arrow <6pt> [.3,.67] from   0   1.1   to    0   1.75
\arrow <6pt> [.3,.67] from -2 -0.9   to  -2 -0.25
\arrow <6pt> [.3,.67] from   2 -0.9   to    2 -0.25
\setcoordinatesystem units <0.5cm,0.5cm> point at 6 0   
\setplotarea x from -5 to 5, y from -4 to 4              
\put {$\bullet$}                 at   0  0
\put {$\w_0$}                    at   0  4.5
\put {$\tA_1\times\tA_1$ case}   at   0 -7
\setsolid
\plot -5  3   1 -3 /
\plot -4  4   4 -4 /
\plot -1  3   5 -3 /
\plot -5 -3   1  3 /
\plot -4 -4   4  4 /
\plot -1 -3   5  3 /
\vshade -4 4 4   <,z,z,z>    0 0 4   <z,z,z,>  4 4 4 /
\setdots <1pt>
\arrow <6pt> [.3,.67] from   0   0.25 to   0   1.75
\arrow <6pt> [.3,.67] from -1 -0.75 to -1  0.75
\arrow <6pt> [.3,.67] from   1 -0.75 to   1  0.75
\arrow <6pt> [.3,.67] from   0 -1.75 to   0 -0.25
\endpicture
\hfil
\caption{Arrows indicating preferred direction.}
\end{figure}
This labelling of the chambers of $\cA_0$ induces a labelling of all
chambers of $\D$ via the retraction~$r$. We say a line segment is {\bf
horizontal} if it is perpendicular to the up direction. In particular,
a horizontal line segment can not be part of a sector wall for any
sector representing~$\w_0$.

It is helpful to imagine that~$\D$ has been folded flat over~$\cA_0$
according to the retraction~$r$ so as to hang down from~$\w_0$. Note that,
as in Figure~\ref{arrows up}, each edge of the apartment~$\cA_0$ is incident
with one chamber which lies above it and one chamber which lies below it.
Given any edge~$(u,v)$ of~$\D$, there is a unique chamber
of~$\D$ containing~$(u,v)$ which retracts to the chamber lying
above~$(r(u), r(v))$ and all other chambers containing~$(u,v)$ retract to the
chamber of~$\cA_0$ lying below~$(r(u),r(v))$.

Suppose that~$C$ and~$C'$ are adjacent chambers of~$\D$. Then
either~$r(C)$ is adjacent to but distinct from~$r(C')$, or~$r(C)$
equals~$r(C')$. In the first case the arrows on the two chambers are
parallel and in the second case they mirror each other through the
common edge and point in converging directions. If~$\cA$ is an
apartment of~$\D$, then the retraction $r : \cA \to \cA_0$ need not be
injective.
The above constraints then lead to overall labellings of
its chambers as in Figure~\ref{generic labelling},
\refstepcounter{picture}
\begin{figure}[htbp]
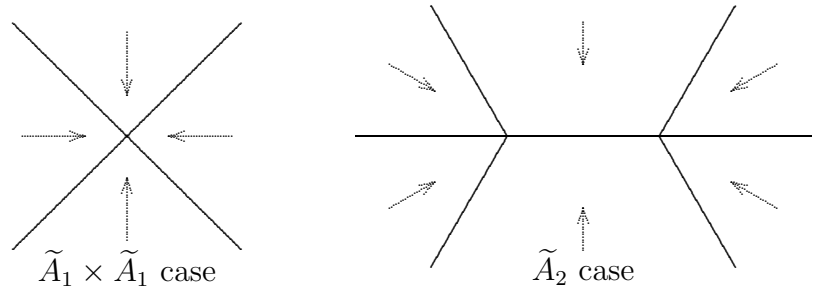
\label{generic labelling}
\hfil
\beginpicture
\setcoordinatesystem units <0.5cm,0.866cm> point at -6 0    
\setplotarea x from -4 to 4, y from -2.5 to 2.5         
\put {$\tA_2$ case}   at   0 -2
\putrule from -6    0     to  6    0
\setlinear
\plot -4  2   -2  0   -4 -2 /
\plot  4 -2   2 0   4 2 /
\setdots <1pt>
\arrow <6pt> [.3,.67] from   0    -1.75 to   0 -1.1
\arrow <6pt> [.3,.67] from   0      1.75 to   0   1.1
\arrow <6pt> [.3,.67] from   5.1   -1.1 to   3.9 -0.7
\arrow <6pt> [.3,.67] from   5.1     1.1 to   3.9   0.7
\arrow <6pt> [.3,.67] from -5.1   -1.1 to -3.9 -0.7
\arrow <6pt> [.3,.67] from -5.1     1.1 to -3.9   0.7
\setcoordinatesystem units <0.5cm,0.5cm> point at 6 0  
\setplotarea x from -4 to 4, y from -4 to 3         
\put {$\tA_1\times\tA_1$ case}   at   0 -3.5
\setsolid
\plot -3  3   0  0   -3 -3 /
\plot  3 -3   0  0    3  3 /
\setdots <1pt>
\arrow <6pt> [.3,.67] from   0    -2.75 to   0  -1.1
\arrow <6pt> [.3,.67] from   0     2.75 to   0   1.1
\arrow <6pt> [.3,.67] from  -2.75  0    to  -1.1 0
\arrow <6pt> [.3,.67] from   2.75  0    to   1.1 0
\endpicture
\hfil
\caption{Generic labelling of apartments.}
\end{figure}
with various degenerate cases also possible. This pattern of arrows is called
the {\bf folding diagram} for~$\cA$, as it indicates how~$\cA$ is folded by~$r$.
The folding diagram is a convenient visual convention for encoding geometric
properties of the retraction. In particular, two apartments have the same
folding diagram if their retractions are equivalent up to translation
in~$\cA_0$.
One may similarly discuss the folding diagram of any connected subset of
an apartment~$\cA$.
Note that in certain degenerate cases it becomes necessary for us to
consider the folding diagram of a line segment in~$\D$ where there
are no chambers for reference. The folding obtained in such a case
will be a piecewise straight line and we trust that the reader will have little
difficulty in appreciating the possibilities.

For clarity of exposition we will now consider the $\tA_1\times\tA_1$
and $\tA_2$ cases separately. Although the ideas used to tackle the
$\tA_1\times\tA_1$ case are also used in the proof of the $\tA_2$
case, the extra complexity of the $\tA_2$ case would make a joint
exposition unwieldy.

\section{The $\tA_1\times\tA_1$ Case}\label{A1 case}

\subsection*{Proof of Theorem~\ref{gpd-thm} in
the~{\boldmath{$\tA_1\times\tA_1$}} case.}

Let $\Wmn=\{ \gamma\in\cG\st \sh(\gamma)=(m,n)\}$ and fix an element
$g\in\CC\cG$ whose support is contained in $\Wmn$, i.e.\ $g\in\CC\Wmn$.
The matrix coefficients of $\rho(g)$ are given by
$\langle \rho(g) \delta_x,\delta_y\rangle =g(x^{-1}y)$
for $x,y\in\vD$ where~$x^{-1}y$ is defined as in~\S\ref{free actions}. Thus,
\[
\langle \rho(g) \delta_x,\delta_y\rangle =
\begin{cases}
g(x^{-1}y) &\text{ if } x^{-1}y\in\Wmn \\
0 & \text{ otherwise.}
\end{cases}
\]
Geometrically, $\langle \rho(g) \delta_x,\delta_y\rangle$ is non-zero
only if $x$ and $y$ are opposite vertices of a rectangle
$\Pmn(x,y)$ of the sort pictured in Figure~\ref{rectangle}.
\refstepcounter{picture}
\begin{figure}[htbp]\label{rectangle}
\hfil
\beginpicture
\setcoordinatesystem units <0.5cm,0.5cm>    
\setplotarea x from -6 to 6, y from -2.5 to 4.5         
\put{$m$} [tl]   at   2.1 -1.1
\put{$m$} [br]  at -2.1   3.1
\put{$n$}  [tr]   at -1.1 -0.1
\put{$n$}  [bl]   at   1.1   2.1
\put{$\bullet$} at    1   -2
\put{$x$}  [t]    at    1    -2.2
\put{$\bullet$} at   -1   4
\put{$y$}  [b]    at -1      4.2
\arrow <10pt> [.3,.67] from   1    -2   to     3   0
\arrow <10pt> [.3,.67] from  -3     2   to   -1   4
\arrow <10pt> [.3,.67] from  -1     4   to     3   0
\arrow <10pt> [.3,.67] from  -3     2   to     1 -2
\endpicture
\hfil
\caption{Convex hull of two vertices in $\D$.}
\end{figure}
This rectangle is the convex hull of~$x$ and~$y$,
and as such is common to all apartments containing both~$x$
and~$y$.

Consider the folding diagram of $\Pmn(x,y)$ induced by the retraction $r$.
In general this will be as indicated in
Figure~\ref{A1 labelling}, although certain degeneracies are possible.
\refstepcounter{picture}
\begin{figure}[htbp]
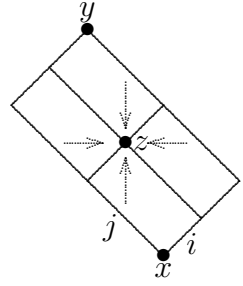
\label{A1 labelling}
\hfil
\beginpicture
\setcoordinatesystem units <0.5cm,0.5cm>    
\setplotarea x from -3 to 3, y from -2.5 to 4.5         
\plot 1  -2    3  0   -1  4     -3 2   1 -2   /
\plot 2 -1   -2 3 /
\plot -1 0   1 2   /
\put{$\bullet$} at  1    -2
\put{$x$} [t]   at  1    -2.2
\put{$\bullet$} at -1     4
\put{$y$} [b]   at -1     4.2
\put{$\bullet$} at  0     1
\put{$z$} [l]   at  0.2   1
\put{$i$} [tl] at  1.6   -1.4
\put{$j$} [tr] at  -0.2   -0.9
\setdots <1pt>
\arrow <6pt> [.3,.67] from    0    -0.6   to    0     0.6
\arrow <6pt> [.3,.67] from    0     2.6   to    0     1.4
\arrow <6pt> [.3,.67] from   -1.6   1     to   -0.6   1
\arrow <6pt> [.3,.67] from    1.6    1     to    0.6   1
\endpicture
\hfil
\caption{General labelling of a rectangle in $\D$.}
\end{figure}
Let $z$ be the apex of the upward labelled subrectangle based at $x$ such that
$x^{-1}z\in W_{i,j}$ with $i$ and $j$ maximal, as in the diagram. We call
$z$ the
{\bf focal point} of the folding.

Associated with each rectangle $\Pmn(x,y)$ is an abstract diagram
$D(x,y)=D_{i,j}$, which is a copy of $\Pmn(x,y)$ in which the
labels of the vertices are forgotten, but the arrows are retained.
Denote by $\cD(m,n)$ the set of all possible diagrams $D_{i,j}$ for
fixed~$m,n\in\NN$.

\begin{lem}
$\#\cD(m,n)=(m+1)(n+1)$.
\end{lem}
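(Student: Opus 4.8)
The plan is to exploit the product structure $\D=\D_1\times\D_2$ (with $\D_1,\D_2$ trees) and reduce everything to the folding of a single geodesic segment in a tree. First I would record that the boundary point $\w_0$, the reference apartment $\cA_0$, the retraction $r$ and the resulting ``up'' labelling all respect the decomposition: $\w_0=(\w_1,\w_2)$ with $\w_k$ an end of $\D_k$, $\cA_0=\cA_0^1\times\cA_0^2$, and $r=r_1\times r_2$ where $r_k$ is the retraction of $\D_k$ onto $\cA_0^k$ centred at $\w_k$. Consequently, if $x=(x_1,x_2)$ and $y=(y_1,y_2)$ satisfy $\sh(x,y)=(m,n)$, then $\Pmn(x,y)$ is the product of the geodesic segments $[x_1,y_1]$ of length $m$ and $[x_2,y_2]$ of length $n$, and its folding diagram is determined by the folding diagrams of $[x_1,y_1]$ under $r_1$ and of $[x_2,y_2]$ under $r_2$.

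Next I would analyse a geodesic segment $[u,w]$ of length $\ell$ in a tree $T$ with end $\w$, reference line $\cA$ and retraction $r\colon T\to\cA$. Let $z$ be the median of $u$, $w$ and $\w$, i.e.\ the vertex at which the three geodesics $[u,w]$, $[u,\w)$ and $[w,\w)$ pairwise meet. Then $[u,z]$ is the initial segment of the ray $[u,\w)$ and points ``up'', while $[z,w]$ points ``down''; writing $i=d(u,z)\in\{0,1,\dots,\ell\}$, the retraction $r$ carries $[u,z]$ isometrically onto the segment running $i$ steps up from $r(u)$ and $[z,w]$ isometrically onto the segment running $\ell-i$ steps down from $r(z)$. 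Hence the folding diagram of $[u,w]$ is a ``tent'' that is completely determined by the single parameter $i$, distinct values of $i$ giving distinct tents (the peak sits in a different place). Since $T$ is thick, every value $i\in\{0,1,\dots,\ell\}$ is realised: put $u$ on $\cA$, climb $i$ edges along $\cA$ to a vertex $z$, then use a third edge at $z$ to descend $\ell-i$ further steps to some $w\notin\cA$.

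Finally I would assemble the pieces. The focal point $z$ of the folding of $\Pmn(x,y)$ is the product $z=(z_1,z_2)$ of the peaks $z_1$ of $[x_1,y_1]$ and $z_2$ of $[x_2,y_2]$; if $(i,j)=\sh(x,z)$ then $0\le i\le m$, $0\le j\le n$, and the folding diagram $D(x,y)=D_{i,j}$ is the product of the tent with peak at position $i$ and the tent with peak at position $j$. In particular $D_{i,j}$ depends only on the pair $(i,j)$, which is what the notation asserts. The correspondence $(i,j)\mapsto D_{i,j}$ is injective, since $i$ and $j$ are recovered from the diagram as the up-distances of the focal point in the two tree directions, and by the thickness argument of the previous paragraph (applied to $\D_1$ and to $\D_2$) every $(i,j)$ with $0\le i\le m$ and $0\le j\le n$ actually occurs. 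Hence $(i,j)\mapsto D_{i,j}$ is a bijection $\{0,\dots,m\}\times\{0,\dots,n\}\to\cD(m,n)$ and $\#\cD(m,n)=(m+1)(n+1)$.

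The step needing the most care is the very first one: verifying that the ``up'' labelling on $\cA_0$---and hence the notion of folding diagram---genuinely factors through the product $\D_1\times\D_2$, so that the folding of a rectangle really is ``the product of two tree-foldings''. Once that is in hand, the tent description of a tree-segment's folding and the use of thickness to realise every peak position are routine.
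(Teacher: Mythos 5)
Your proof is correct and in essence coincides with the paper's: the diagram is determined by the location of the focal point, of which there are $(m+1)(n+1)$ possible positions. Your additional steps --- factoring the retraction as $r=r_1\times r_2$ over the two tree factors so that the folding of $\Pmn(x,y)$ is a product of two ``tents'' with peaks at $(i,j)$, and invoking thickness to realize every peak position --- simply make explicit the identification and the surjectivity that the paper's two-sentence proof takes for granted (and only the upper bound on $\#\cD(m,n)$ is actually needed for Theorem~\ref{gpd-thm}).
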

\begin{proof}
In order to determine the number of possible diagrams it is sufficient to
enumerate the
possible locations of the focal point of the folding. Hence, there are
$(m+1)(n+1)$ possible diagrams.
\end{proof}

For each $D\in \cD(m,n)$, define an operator $T_D$ on $\ell^2(\vD)$ by
\[
\langle T_D \delta_x,\delta_y\rangle =
\begin{cases} g(x^{-1}y) & \text{ if } D(x,y)=D \\
0 & \text{ otherwise.}\end{cases}
\]
Then $\rho(g)=\displaystyle\sum_{D\in\cD(m,n)}T_D$ and so
\begin{equation}
\| \rho(g)\|\leq \sum_{D\in\cD(m,n)} \| T_D\| .
\end{equation}
We now fix $D=D_{i,j}\in \cD(m,n)$ and proceed to estimate $\| T_D\|$. For each
$z\in\vD$ define
\[
\cH_{z}=\langle \delta_x \st x^{-1}z\in W_{i,j}
\text{ and } \cS_{z}(\w_0)\subseteq \cS_x(\w_0) \rangle
\]
and
\[
\cK_{z}=\langle \delta_y \st z^{-1}y\in W_{m-i,n-j}
\text{ and } \cS_{z}(\w_0)\subseteq \cS_y(\w_0) \rangle .
\]
These give rise to two decompositions;
$\ell^2(\vD)=\oplus_{z\in\vD}\cH_{z}=\oplus_{z\in\vD}\cK_{z}$. Define an
operator $\Tzz  : \cH_{z}\longrightarrow \cK_{z}$ by
\[
\langle \Tzz \delta_x,\delta_y\rangle = g(x^{-1}y)  \text{ for } x\in\cH_{z}
\text{ and } y\in\cK_{z} ,
\]
and define $T_{z',z} : \cH_{z}\longrightarrow \cK_{z'}$ to be zero for
$z'\neq z$. Then $T_D$ can be expressed as a block diagonal operator matrix
$T_D=[T_{z',z}]=\oplus_{z\in\vD}\Tzz $ and it is sufficient to show that for
fixed $z\in\vD$, $\| \Tzz\| \leq  \| g\|_2$.

So fix~$z$ and suppose that $\delta_x\in\cH_z$ and $\delta_y\in\cK_z$.
According to Lemma~\ref{shape decomposition for groupoid}, $x$ and~$y$
are uniquely determined by~$x^{-1}y$.  So each $\gamma\in\Wmn$
contributes at most one matrix coefficient of $\Tzz$ in the
form~$g(\gamma)$.  Therefore we have
$\left\| \Tzz \right\| \leq \left\| \Tzz \right\|_{HS} \leq \| g\|_2$.

This concludes our proof of Theorem~\ref{gpd-thm} in the~$\tA_1\times\tA_1$
case. \hfill $\qedsymbol$

\section{The $\tA_2$ Case}\label{A2 case}

\subsection*{Proof of Theorem~\ref{gpd-thm} in the~{\boldmath{$\tA_2$}} case.}

Let $\Wmn=\{ \gamma\in\cG: \sh(\gamma)=(m,n)\}$ and fix an element
$g\in\CC\cG$ whose support is contained in $\Wmn$, i.e.\ $g\in\CC\Wmn$. The
matrix coefficients of $\rho(g)$ are given by
$\langle \rho(g) \delta_x,\delta_y\rangle =g(x^{-1}y)$
for $x,y\in\vD$. Thus,
\[
\langle \rho(g) \delta_x,\delta_y\rangle =
\begin{cases}
g(x^{-1}y) &\text{ if } x^{-1}y\in\Wmn \\
0 & \text{ otherwise.}
\end{cases}
\]
Geometrically, $\langle \rho(g) \delta_x,\delta_y\rangle$ is non-zero only if
$x$ and $y$ are opposite vertices of a parallelogram $\Pmn(x,y)$ of the sort
pictured in Figure~\ref{parallelogram}.
\refstepcounter{picture}
\begin{figure}[htbp]\label{parallelogram}
\hfil
\beginpicture
\setcoordinatesystem units <0.5cm,0.866cm>    
\setplotarea x from -6 to 6, y from -2.5 to 4.5         
\put{$m$}  [tl]  at  2.1 -1.1
\put{$m$}  [br]  at -2.1  3.1
\put{$n$}  [tr]  at -1.1 -0.1
\put{$n$}  [bl]  at  1.1  2.1
\put{$\bullet$}  at  1   -2
\put{$x$}  [t]   at  1   -2.2
\put{$\bullet$}  at -1    4
\put{$y$}  [b]   at -1    4.2
\arrow <10pt> [.3,.67] from   1    -2   to     3   0
\arrow <10pt> [.3,.67] from  -3     2   to    -1   4
\arrow <10pt> [.3,.67] from  -1     4   to     3   0
\arrow <10pt> [.3,.67] from  -3     2   to     1  -2
\endpicture
\hfil
\caption{Convex hull of two vertices in $\D$.}
\end{figure}
This parallelogram is the convex hull of~$x$ and~$y$, and as such is common to
all apartments containing both~$x$ and~$y$. Note that~$x$ and~$y$ are at the
acute angles of the parallelogram.

Consider the folding of~$\Pmn(x,y)$ induced by the retraction $r$. This will
be one of the possibilities depicted in Figure~\ref{possibilities},
with possible degeneracies.
\refstepcounter{picture}
\begin{figure}[htbp]
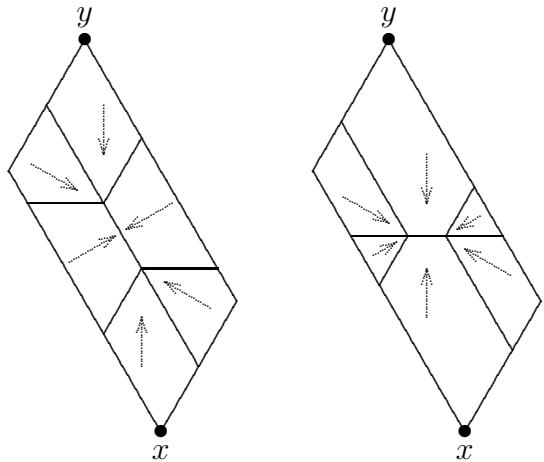
\label{possibilities}
\hfil
\beginpicture
\setcoordinatesystem units <0.5cm,0.866cm>    
\setplotarea x from -3 to 11, y from -2.5 to 4.5         
\putrule from  0.5 0.5 to  2.5 0.5
\putrule from -2.5 1.5 to -0.5 1.5
\plot 1  -2    3  0   -1  4     -3 2   1 -2   /
\plot 2 -1   -2 3 /
\plot -0.5 -0.5   0.5 0.5   /
\plot -0.5  1.5    0.5 2.5   /
\put{$\bullet$} at  1    -2
\put{$x$} [t]   at  1    -2.2
\put{$\bullet$} at -1     4
\put{$y$} [b]   at -1     4.2
\setdots <1pt>
\arrow <6pt> [.3,.67] from    0.5  -1     to    0.5   -0.25
\arrow <6pt> [.3,.67] from   -0.5   3     to   -0.5    2.25
\arrow <6pt> [.3,.67] from    1.3   1.5   to    0.1    1.1
\arrow <6pt> [.3,.67] from   -1.4   0.6   to   -0.2    1
\arrow <6pt> [.3,.67] from    2.3  -0.1   to    1.1    0.3
\arrow <6pt> [.3,.67] from   -2.4   2.1   to   -1.2    1.7
%
%
\setsolid
\putrule from  6 1 to  10  1
\plot 9  -2    11  0   7  4     5 2   9 -2   /
\plot  7.5   1      5.75 2.75 /
\plot 10.25 -0.75   8.5  1 /
\plot  8.5   1      9.25 1.75  /
\plot  7.5   1      6.75 0.25   /
\put{$\bullet$} at  9    -2
\put{$x$} [t]   at  9    -2.2
\put{$\bullet$} at  7     4
\put{$y$} [b]   at  7     4.2
\setdots <1pt>
\arrow <6pt> [.3,.67] from    8    -0.25  to    8      0.5
\arrow <6pt> [.3,.67] from    8     2.25  to    8      1.5
\arrow <6pt> [.3,.67] from    9.4   1.3   to    8.8    1.1
\arrow <6pt> [.3,.67] from    6.6   0.7   to    7.2    0.9
\arrow <6pt> [.3,.67] from    10.2  0.4   to    9      0.8
\arrow <6pt> [.3,.67] from    5.8   1.6   to    7      1.2
\endpicture
\hfil
\caption{Possible labellings of parallelograms.}
\end{figure}
Let~$z_x$ be the apex of the upward labelled region based at~$x$
such that~$x^{-1}z_x\in W_{i,j}$ with~$i$ and~$i+j$ maximal. Similarly,
let~$z_y$ be the apex of the upward labelled region based at $y$ such that
$y^{-1}z_y\in W_{k,l}$ with~$k$ and~$k+l$ maximal. In terms of our diagrams,
$z_x$ and $z_y$ would be as labelled in Figure~\ref{zs in A2}.

\refstepcounter{picture}
\begin{figure}[htbp]
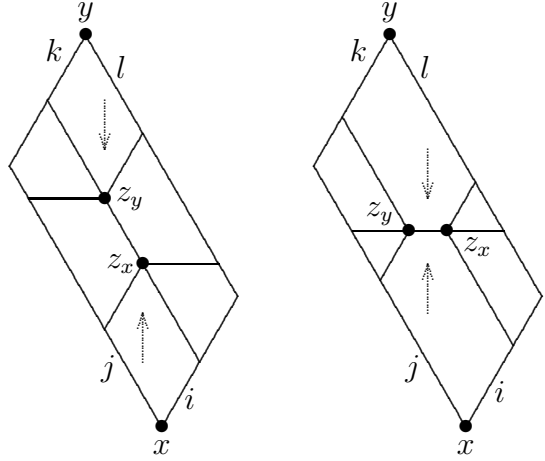
\label{zs in A2}
\hfil
\beginpicture
\setcoordinatesystem units <0.5cm,0.866cm>    
\setplotarea x from -3 to 11, y from -2.5 to 4.5         
\putrule from  0.5 0.5 to  2.5 0.5
\putrule from -2.5 1.5 to -0.5 1.5
\setlinear
\plot 1  -2    3  0   -1  4     -3 2   1 -2   /
\plot 2 -1   -2 3 /
\plot -0.5 -0.5   0.5 0.5   /
\plot -0.5  1.5    0.5 2.5   /
\put{$\bullet$} at  1    -2
\put{$x$} [t]   at  1    -2.2
\put{$\bullet$} at -1     4
\put{$y$} [b]   at -1     4.2
\put{$\bullet$}  at  -0.5   1.5
\put{${z_y}$} [l] at  -0.2   1.5
\put{$\bullet$} at  0.5 0.5
\put{${z_x}$} [r] at  0.3   0.5
\put{$i$} [tl] at  1.6   -1.4
\put{$j$} [tr] at  -0.2   -0.9
\put{$k$} [br] at  -1.6   3.6
\put{$l$} [bl] at  -0.2   3.35
\setdots <1pt>
\arrow <6pt> [.3,.67] from    0.5  -1     to    0.5   -0.25
\arrow <6pt> [.3,.67] from   -0.5   3     to   -0.5    2.25
%
%
\setsolid
\putrule from  6 1 to  10  1
\plot 9  -2    11  0   7  4     5 2   9 -2   /
\plot  7.5   1      5.75 2.75 /
\plot 10.25 -0.75   8.5  1 /
\plot  8.5   1      9.25 1.75  /
\plot  7.5   1      6.75 0.25   /
\put{$\bullet$}  at  9    -2
\put{$x$} [t]    at  9    -2.2
\put{$\bullet$}  at  7     4
\put{$y$} [b]    at  7     4.2
\put{$\bullet$}  at  7.5   1
\put{${z_y}$} [br] at  7.1   1.1
\put{$\bullet$}  at  8.5   1
\put{${z_x}$} [tl] at  8.92   0.9
\put{$i$} [tl] at  9.75  -1.3
\put{$j$} [tr] at  7.8   -0.9
\put{$k$} [br] at  6.4   3.6
\put{$l$} [bl] at  7.8   3.35
\setdots <1pt>
\arrow <6pt> [.3,.67] from    8    -0.25  to    8      0.5
\arrow <6pt> [.3,.67] from    8     2.25  to    8      1.5
\endpicture
\hfil
\caption{Positions of the vertices $z_x$ and $z_y$.}
\end{figure}

We shall refer to~$z_x$ and~$z_y$ as the {\bf focal points} of the
folding. If $z_x\neq z_y$, the convex hull of~$z_x$ and~$z_y$ is a
line segment which we denote $z_x\twohd z_y$ and the retraction~$r$
maps~$z_x\twohd z_y$ injectively to a horizontal line
segment~$r(z_x\twohd z_y)$ in~$\cA_0$.  Note that the positions of
the focal points completely determine the folding. Among possible
degeneracies, we could have~$z_x=x$ and~$z_y=y$, or~$z_x=z_y$.

Associated to each parallelogram~$\Pmn(x,y)$ is an abstract diagram
$D(x,y)=D_{i,j}^{k,l}$, which is a copy of~$\Pmn(x,y)$ in which the
labels of the vertices are forgotten, but the arrows are retained as in
Figure~\ref{arrows only}.

\refstepcounter{picture}
\begin{figure}[htbp]
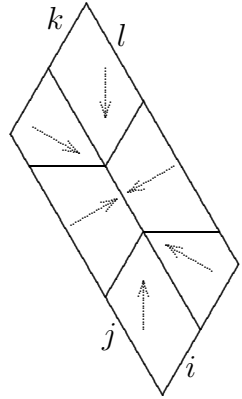
\label{arrows only}
\hfil
\beginpicture
\setcoordinatesystem units <0.5cm,0.866cm>    
\setplotarea x from -6 to 6, y from -2 to 4         
\putrule from  0.5 0.5 to  2.5 0.5
\putrule from -2.5 1.5 to -0.5 1.5
\setlinear
\plot 1  -2    3  0   -1  4     -3 2   1 -2   /
\plot 2 -1   -2 3 /
\plot -0.5 -0.5   0.5 0.5   /
\plot -0.5  1.5    0.5 2.5   /
\put{$i$} [tl] at  1.6   -1.4
\put{$j$} [tr] at  -0.2   -0.9
\put{$k$} [br] at  -1.6   3.6
\put{$l$} [bl] at  -0.2   3.35
\setdots <1pt>
\arrow <6pt> [.3,.67] from    0.5  -1     to    0.5   -0.25
\arrow <6pt> [.3,.67] from   -0.5   3     to   -0.5    2.25
\arrow <6pt> [.3,.67] from    1.3   1.5   to    0.1    1.1
\arrow <6pt> [.3,.67] from   -1.4   0.6   to   -0.2    1
\arrow <6pt> [.3,.67] from    2.3  -0.1   to    1.1    0.3
\arrow <6pt> [.3,.67] from   -2.4   2.1   to   -1.2    1.7
\endpicture
\hfil
\caption{The diagram $D_{i,j}^{k,l}$.}
\end{figure}

Denote by $\cD(m,n)$ the set of all possible diagrams $D_{i,j}^{k,l}$ for fixed
$m,n\in\NN$. We divide the family $\cD(m,n)$ into two classes as follows. Let
$\cD'(m,n)$ be the set of diagrams associated to parallelograms $P_{m,n}(x,y)$
in which $x^{-1}z_x\in W_{p,q}$ and $x^{-1}z_y\in W_{s,t}$ where $p+q=s+t$.
Thus a diagram $D\in\cD(m,n)$ is in $\cD'(m,n)$ if it is of the form
depicted in Figure~\ref{E(m,n)} or if the focal points are coincident.
\refstepcounter{picture}
\begin{figure}[htbp]\label{E(m,n)}
\hfil
\beginpicture
\setcoordinatesystem units <0.5cm,0.866cm>    
\setplotarea x from 5 to 11, y from -2 to 4         
\setsolid
\putrule from  6 1 to  10  1
\plot 9  -2    11  0   7  4     5 2   9 -2   /
\plot  7.5   1      5.75 2.75 /
\plot 10.25 -0.75   8.5  1 /
\plot  8.5   1      9.25 1.75  /
\plot  7.5   1      6.75 0.25   /
\setdots <1pt>
\arrow <6pt> [.3,.67] from    8    -0.25  to    8      0.5
\arrow <6pt> [.3,.67] from    8     2.25  to    8      1.5
\arrow <6pt> [.3,.67] from    9.4   1.3   to    8.8    1.1
\arrow <6pt> [.3,.67] from    6.6   0.7   to    7.2    0.9
\arrow <6pt> [.3,.67] from    10.2  0.4   to    9      0.8
\arrow <6pt> [.3,.67] from    5.8   1.6   to    7      1.2
\endpicture
\hfil
\caption{Parallelograms in $\cD'(m,n)$.}
\end{figure}
Let $\cD''(m,n)=\cD(m,n)\setminus\cD'(m,n)$. So $D\in\cD(m,n)$ is in
$\cD''(m,n)$ if it is of one of the forms depicted in
Figure~\ref{F(m,n)} with distinct focal points.
\refstepcounter{picture}
\begin{figure}[htbp]\label{F(m,n)}
\hfil
\beginpicture
\setcoordinatesystem units <0.5cm,0.866cm> point at 4 1   
\setplotarea x from -3 to 3, y from -2 to 4         
\putrule from  0.5 0.5 to  2.5 0.5
\putrule from -2.5 1.5 to -0.5 1.5
\plot 1  -2    3  0   -1  4     -3 2   1 -2   /
\plot 2 -1   -2 3 /
\plot -0.5 -0.5   0.5 0.5   /
\plot -0.5  1.5    0.5 2.5   /
\setdots <1pt>
\arrow <6pt> [.3,.67] from    0.5  -1     to    0.5   -0.25
\arrow <6pt> [.3,.67] from   -0.5   3     to   -0.5    2.25
\arrow <6pt> [.3,.67] from    1.3   1.5   to    0.1    1.1
\arrow <6pt> [.3,.67] from   -1.4   0.6   to   -0.2    1
\arrow <6pt> [.3,.67] from    2.3  -0.1   to    1.1    0.3
\arrow <6pt> [.3,.67] from   -2.4   2.1   to   -1.2    1.7
%
%
\setcoordinatesystem units <0.5cm,0.866cm> point at -4 1   
\setplotarea x from -3 to 3, y from -2 to 4         
\setsolid
\putrule from  -0.5 0.5 to  -2.5 0.5
\putrule from 2.5 1.5 to 0.5 1.5
\plot -1  -2    -3  0   1  4     3 2   -1 -2   /
\plot -2 -1   2 3 /
\plot 0.5 -0.5   -0.5 0.5   /
\plot 0.5  1.5   -0.5 2.5   /
\setdots <1pt>
\arrow <6pt> [.3,.67] from   -0.5  -1     to   -0.5   -0.25
\arrow <6pt> [.3,.67] from    0.5   3     to    0.5    2.25
\arrow <6pt> [.3,.67] from   -1.3   1.5   to   -0.1    1.1
\arrow <6pt> [.3,.67] from    1.4   0.6   to    0.2    1
\arrow <6pt> [.3,.67] from   -2.3  -0.1   to   -1.1    0.3
\arrow <6pt> [.3,.67] from    2.4   2.1   to    1.2    1.7
\endpicture
\hfil
\caption{Parallelograms in $\cD''(m,n)$.}
\end{figure}

\begin{lem}\label{parallel estimate}
$\#\cD''(m,n) = 1/2(m+1)(n+1)(m+n)$.
\end{lem}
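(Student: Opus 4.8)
The plan is to count the diagrams in $\cD''(m,n)$ by enumerating the admissible positions of the two focal points $z_x$ and $z_y$, exactly as in the $\tA_1\times\tA_1$ lemma, but now respecting the extra combinatorial data encoded in the $\tA_2$ folding. First I would recall that a diagram $D_{i,j}^{k,l}\in\cD(m,n)$ is completely determined by the pair of focal points, equivalently by the shapes $x^{-1}z_x\in W_{i,j}$ and $y^{-1}z_y\in W_{k,l}$, subject to the geometric constraint that these two upward-labelled regions fit inside the parallelogram $\Pmn$ without overlapping past the horizontal segment $r(z_x\twohd z_y)$. Reading off Figure~\ref{zs in A2}, the constraint is $0\le i\le m$, $0\le j\le n$, $0\le k\le m$, $0\le l\le n$, together with the ``fitting'' condition that comes from the diagonal separating the two regions; writing things in terms of $x^{-1}z_x\in W_{p,q}$ and $x^{-1}z_y\in W_{s,t}$ this becomes a single linear inequality relating $p+q$ and $s+t$. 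The class $\cD'(m,n)$ is precisely where $p+q=s+t$ (the borderline case, including $z_x=z_y$), and $\cD''(m,n)$ is where the inequality is strict.

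The key step is therefore to set up the right coordinates and turn the count into a lattice-point sum. Concretely, I would parametrize a diagram in $\cD''(m,n)$ by the position of $z_x$ — say $x^{-1}z_x\in W_{i,j}$ with $0\le i\le m$, $0\le j\le n$ — and then count the admissible positions for $z_y$ strictly ``above'' the diagonal through $z_x$ along the horizontal direction determined by $\w_0$. A short geometric argument using the folding diagram shows that, once $z_x$ is fixed at level $i+j$ (measuring height from $x$), the focal point $z_y$ must lie at a strictly smaller level on the $y$-side, and the number of its admissible positions is the number of lattice points at that level inside the parallelogram, which is a simple linear function of $i+j$. Summing this over all $(i,j)$ with $0\le i\le m$, $0\le j\le n$ gives a double sum of the form $\sum_{i,j}(\text{linear in }i+j)$, which evaluates to a polynomial in $m$ and $n$; one then checks it equals $\tfrac12(m+1)(n+1)(m+n)$. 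A useful sanity check along the way: $\#\cD(m,n)=\#\cD'(m,n)+\#\cD''(m,n)$, and $\#\cD'(m,n)$ should come out to $(m+1)(n+1)$ (the ``$p+q=s+t$'' diagonal, matched by a single focal-point location just as in the $\tA_1\times\tA_1$ case plus the coincident cases), so the total $\#\cD(m,n)=\tfrac12(m+1)(n+1)(m+n+2)$, which is exactly the polynomial appearing in the statement of Theorem~\ref{gpd-thm} before the square-root factor — a reassuring consistency.

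The main obstacle I anticipate is not the arithmetic but pinning down the exact ``fitting'' inequality from the picture and handling the degenerate cases cleanly: when $z_x=x$ or $z_y=y$, when one of the two upward regions is a single vertex, and the boundary between the two mirror-image shapes in Figure~\ref{F(m,n)}. I would deal with this by arguing purely in terms of the folding diagram of the parallelogram — since the positions of the focal points completely determine the folding, it suffices to enumerate valid pairs $(z_x,z_y)$ of points in $\Pmn$, and the constraints ``$i,i+j$ maximal for $z_x$'' and ``$k,k+l$ maximal for $z_y$'' translate into the statement that $z_x$ and $z_y$ are the two extreme points of the horizontal ridge of the fold. Once that combinatorial characterization is in hand, the count is forced, and the degeneracies are simply the endpoints of the ranges of summation, contributing nothing exceptional.
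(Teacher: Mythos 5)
Your parametrization goes wrong at the key step: the set of admissible positions for the second focal point given the first is not what you say it is. A diagram lies in $\cD''(m,n)$ exactly when the two focal points are distinct and at different distances from $x$, and the essential geometric fact (stated in the paper just before the lemma) is that $z_x\twohd z_y$ is then a line segment with $z_x^{-1}z_y\in W_{p,0}$ or $W_{0,q}$ retracting to a horizontal line; equivalently, the two focal points must be \emph{collinear along a direction parallel to one of the sides of the parallelogram}. So once one focal point is fixed at position $(i,j)$, the other ranges over the two side-parallel lines through it, giving $m+n$ positions independently of $(i,j)$, and since an unordered pair of such positions determines the diagram one divides by $2$ to get $\tfrac12(m+1)(n+1)(m+n)$ --- this is the paper's count. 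You instead let $z_y$ range over ``the lattice points at that level inside the parallelogram'': a level set is the wrong admissible set (it ignores the collinearity constraint, and ``that level'' is not even a single level once you allow all strictly smaller ones), and its cardinality is $\min(\ell,m,n,m+n-\ell)+1$-type, not a linear function of $i+j$, so the asserted evaluation of your double sum to $\tfrac12(m+1)(n+1)(m+n)$ does not follow from what you wrote. A one-sided version of your scheme can be made to work --- if you prove that $z_y$ always lies on the far side of $z_x$ along one of the two side-parallel lines, the count per $(i,j)$ is $(m-i)+(n-j)$ and the sum is indeed $\tfrac12(m+1)(n+1)(m+n)$ with no division by $2$ --- but that geometric input is exactly what your proposal leaves vague and partly contradicts (``strictly smaller level on the $y$-side'').

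Your closing ``sanity check'' is also based on a misreading: the paper never claims $\#\cD'(m,n)=(m+1)(n+1)$, and $\tfrac12(m+1)(n+1)(m+n+2)$ is not $\#\cD(m,n)$. The factor $(m+1)(n+1)$ in the theorem comes from the operator estimate $\bigl\|\sum_{D\in\cD'_{i,j}(m,n)}T_D\bigr\|\le\|g\|_2$ after grouping the $\cD'$ diagrams by the position of a distinguished focal point (there may be several diagrams per position, e.g.\ coincident focal points and same-level pairs), and the polynomial in Theorem~\ref{gpd-thm} is $(m+1)(n+1)+\#\cD''(m,n)$, not a count of all diagrams. So the ``consistency'' you find is illusory and signals that the decomposition $\cD=\cD'\cup\cD''$ and its role in the norm estimate have been misunderstood.
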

\begin{proof}
In order to determine the number of diagrams in $\cD''(m,n)$ it is sufficient to
enumerate the possible positions of the focal points in the foldings. There are
$(m+1)(n+1)$ choices for the first focal point. Given one focal point, the
possible positions of the other are indicated in Figure~\ref{z2 positions}.
So there are $m+n$ positions the second focal point could occupy.
Since the order of choice is irrelevant we must divide the total number of
possibilities by a factor of $2$. Hence the result.
\refstepcounter{picture}
\begin{figure}[htbp]
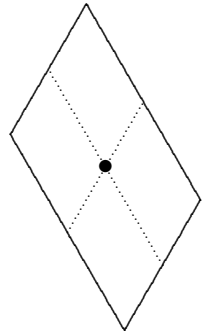
\label{z2 positions}
\hfil
\beginpicture
\setcoordinatesystem units <0.5cm,0.866cm>    
\setplotarea x from -6 to 6, y from -2 to 3         
\put{$\bullet$} at 0.5 0.5
\plot 1  -2    3  0   0  3     -2 1   1 -2   /
\setdots <2pt>
\plot -0.5 -0.5   1.5 1.5   /
\plot 2 -1   -1 2 /
\endpicture
\hfil
\caption{Possible positions of second focal point.}
\end{figure}
\end{proof}

For each $D\in \cD(m,n)$, define an operator $T_D$ on
$\ell^2(\vD)$ by
\[
\langle T_D \delta_x,\delta_y\rangle =
\begin{cases} g(x^{-1}y) & \text{ if } D(x,y)=D \\
0 & \text{ otherwise.}\end{cases}
\]
Thus
$\rho(g)=\displaystyle\sum_{D\in\cD(m,n)}T_D=
\displaystyle\sum_{D\in\cD'(m,n)}T_D+\displaystyle\sum_{D\in\cD''(m,n)}T_D$.

For each diagram $D\in\cD'(m,n)$ choose one of the focal points and call it a
{\bf distinguished focal point}. There are $(m+1)(n+1)$ possible positions for
the distinguished focal point. Let $\cD'_{i,j}(m,n)$ be the set of diagrams in
$\cD'(m,n)$ whose distinguished focal point is in position $(i,j)$, so that
\[
\displaystyle\sum_{D\in\cD'(m,n)}T_D=
\displaystyle\sum_{i=0}^m\sum_{j=0}^n\sum_{D\in\cD'_{i,j}(m,n)}T_D.
\]
An argument analogous to that used in~\S\ref{A1 case} shows
that
\[
\left\|\displaystyle\sum_{D\in\cD'_{i,j}(m,n)}T_D\right\|\leq \| g\|_2.
\]
Hence
\[
\left\|\displaystyle\sum_{D\in\cD'(m,n)}T_D\right\|\leq (m+1)(n+1)\| g\|_2
\]
and so
\begin{equation}\label{1}
\| \rho(g)\|\leq (m+1)(n+1)\|g\|_2 +\sum_{D\in\cD''(m,n)} \| T_D\| .
\end{equation}
We proceed to show that, for $D\in\cD''(m,n)$, $\left\|T_D\right\|\leq
\sqrt{\max(m,n)+1} \|g\|_2$. Since
\[
(m+1)(n+1)+\frac{1}{2}(m+1)(n+1)(m+n)=\frac{1}{2}(m+1)(n+1)(m+n+2)
\]
we will then have established that
\[
\|\rho(g)\|\leq
1/2(m+1)(n+1)(m+n+2)\sqrt{\max(m,n)+1}\;\|g\|_2.
\]

We now fix $D=D_{i,j}^{k,l}\in \cD''(m,n)$ and estimate
$\| T_D\|$. Note that, if $D(x,y)=D$, then $z_x^{-1}z_y\in W_{s,t}$ where
$(s,t)= (p,0)$ or $(0,q)$, and $(s,t)$ is determined by~$D$. Thus
$i,j,k,l,s,t$ are all now fixed.

Given $z\in\vD$, define
\[
\cH_{z}=\langle \delta_x \st x^{-1}z\in W_{i,j}
\text{ and } \cS_{z}(\w_0)\subseteq \cS_x(\w_0) \rangle
\]
and
\[
\cK_{z}=\langle \delta_y \st y^{-1}z\in W_{k,l}
\text{ and } \cS_{z}(\w_0)\subseteq \cS_y(\w_0) \rangle .
\]
These give rise to two decompositions;
$\ell^2(\vD)=\oplus_{z\in\vD}\cH_{z}=\oplus_{z\in\vD}\cK_{z}$.

If $(z_1,z_2)\in\vD\times\vD$ with $z_1^{-1}z_2\in W_{s,t}$ and
$r$ maps $z_1\twohd z_2$ injectively to a horizontal line, define
$T_{z_2,z_1} : \cH_{z_1}\longrightarrow \cK_{z_2}$ by
\[
\langle T_{z_2,z_1}\delta_x,\delta_y\rangle = g(x^{-1}y)
\text{ for } \delta_x\in\cH_{z_1}
\text{ and } \delta_y\in\cK_{z_2} ,
\]
and define $T_{z_2,z_1} : \cH_{z_1}\longrightarrow \cK_{z_2}$ to be zero for
other pairs $(z_1,z_2)\in\vD\times\vD$. Then $T_D$ can be expressed as an
operator matrix $T_D=[T_{z_2,z_1}]$ and therefore
\begin{equation}\label{2}
\| T_D\| \leq  \left\| \: \left[ \, \left\| T_{z_2,z_1}
\right\| \, \right] \: \right\|
\end{equation}
by Lemma~\ref{B}. We proceed to estimate $\|T_{z_2,z_1}\|$.

Fix $(z_1,z_2)\in\vD\times\vD$ with $z_1^{-1}z_2\in W_{s,t}$ and
such that $r$ maps $z_1\twohd z_2$ injectively to a horizontal line.
Thus $r(z_1\twohd z_2)$ is a horizontal line of shape~$(s,t)$.
The vectors $\delta_x\in\cH_{z_1}$, $\delta_y\in\cK_{z_2}$ are uniquely
determined by $x^{-1}y\in\cG$ 
according to Lemma~\ref{shape decomposition for groupoid}.
Thus each $\gamma\in\Wmn$ contributes at most
one matrix coefficient of $T_{z_2,z_1}$ in the form $g(\gamma)$.
Define $\tg\in\CC W_{s,t}$ by
\[
\tg(\zeta)= \begin{cases}
\left( \displaystyle
\sum_{\substack{\alpha\in W_{i,j}, \beta\in W_{k,l}\\ \alpha\zeta\beta\in\Wmn}}
|g(\alpha\zeta\beta)|^2 \right)^{1/2}  & \text{ if } \zeta\in W_{s,t} \\
0 & \text{ otherwise. }
\end{cases}
\]
Note that $\| \tg\|_2=\| g\|_2$ and
\begin{equation}\label{3}
\| T_{z_2,z_1}\| \leq \| T_{z_2,z_1}\|_{HS} \leq \tg(z_1^{-1}z_2).
\end{equation}

Define a new operator $\tTD$ on $\ell^2(\vD)$ by
\[
\langle \tTD \delta_{z_1},\delta_{z_2}\rangle =
\begin{cases}
\tg(z_1^{-1}z_2) & \text{ if $z_1^{-1}z_2\in W_{s,t}$ and
$r(z_1\twohd z_2)$ is a horizontal line of shape $(s,t)$,} \\
0 & \text{ otherwise.}
\end{cases}
\]
By~(\ref{2}),~(\ref{3}) and Lemma~\ref{C} we have $\| T_D\|\leq\|\tTD\|$.
Since we also have
$\| \tg\|_2=\| g\|_2$, it will be sufficient to prove
\begin{equation}\label{4}
\|\tTD\|\leq \sqrt{\max(m,n)+1}\;\|\tg\|_2
   .
\end{equation}

Recall that $(s,t)=(p,0)$ with $0\leq p\leq m$ or $(0,q)$ with
$0\leq p\leq n$. We suppose the former and prove that
\begin{equation}\label{5}
\|\tTD\|\leq\sqrt{p+1}\;\|\tg\|_2.
\end{equation}
A similar argument in the case $(s,t)=(0,q)$ gives
$\|\tTD\|\leq\sqrt{q+1}\;\|\tg\|_2$ thus establishing~(\ref{4}).

In order to prove~(\ref{5}), we simplify our notation.
Let $g\in\CC \Wp$ where $0\leq p\leq m$ and define an operator $T$ on
$\ell^2(\vD)$ by
\[
\langle T \delta_x,\delta_y\rangle =
\begin{cases}
g(x^{-1}y) & \text{ if } x^{-1}y\in \Wp \text{ and }
r(x\twohd y)  \text{ is a horizontal line of shape }(p,0),\\
0 & \text{ otherwise.}
\end{cases}
\]
We must prove
\begin{equation}\label{6}
\| T\|\leq \sqrt{p+1}\;\| g\|_2.
\end{equation}

Given $(x,y)\in\vD\times\vD$ with $x^{-1}y\in \Wp$ and such that
$r(x\twohd y)$ is a horizontal line of shape~$(p,0)$, there
is a unique $z=z(x,y)\in\vD$ such that the convex hull
of $x,y$ and $z$ is labelled as in Figure~\ref{unique z} and such that
$z^{-1}x, x^{-1}y, y^{-1}z\in \Wp$. That is $xyz$ is an equilateral triangle,
with side of length $p$, base $xy$ and pointing to $\omega_0$.
\refstepcounter{picture}
\begin{figure}[htbp]
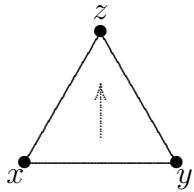
\label{unique z}
\hfil
\beginpicture
\setcoordinatesystem units <0.5cm,0.866cm>    
\setplotarea x from -3 to 3, y from -0.5 to 2.5         
\setlinear
\plot -2 0  2 0  0 2  -2 0 /
\put{$\bullet$} at -2   0
\put{$x$} [tr]  at -2  -0.1
\put{$\bullet$} at  2   0
\put{$y$} [tl]  at  2  -0.1
\put{$\bullet$} at  0   2
\put{$z$} [b]   at  0   2.2
\setdots <1pt>
\arrow <6pt> [.3,.67] from    0   0.4   to   0   1.2
\endpicture
\hfil
\caption{Labelling of convex hull of $x$, $y$ and $z=z(x,y)$.}
\end{figure}
To see this, suppose that $x_1$ is the vertex adjacent to, but
distinct from, $x$ in $x\twohd y$. Let $w$ be the third vertex of the
unique chamber containing $(x,x_1)$ and retracting above
$(r(x),r(x_1))$. The convex hull of $x,y$ and $w$ is a
trapezoidal strip in~$\D$ which has $x\twohd y$ as one of its bases,
contains the chamber with vertices $(x,x_1,w)$ and whose other base,
$w\twohd w'$, has length~$p-1$ as in Figure~\ref{trapezoidal strip}.
\refstepcounter{picture}
\begin{figure}[htbp]\label{trapezoidal strip}
\hfil
\beginpicture
\setcoordinatesystem units <0.5cm,0.866cm>    
\setplotarea x from -5 to 5, y from -0.5 to 1.5         
\setlinear
\plot -4 0  4 0  3 1  -3 1  -4 0 /
\plot -3 1 -2 0 /
\put{$\bullet$} at -4   0
\put{$x$} [tr]  at -4  -0.1
\put{$\bullet$} at -2   0
\put{$x_1$}[t]  at -2  -0.1
\put{$\bullet$} at  4   0
\put{$y$} [tl]  at  4  -0.1
\put{$\bullet$} at -3   1
\put{$w$} [b]   at -3   1.2
\put{$\bullet$} at  3   1
\put{$w'$}[b]   at  3   1.2
\setdots <1pt>
\arrow <6pt> [.3,.67] from    0   0.2   to   0   0.8
\arrow <6pt> [.3,.67] from   -3   0.2   to  -3   0.8
\endpicture
\hfil
\caption{Trapezoidal strip retracting above $x\twohd y$.}
\end{figure}
Since the retraction is contractive,~$r$
must map~$w\twohd w'$ injectively to a horizontal line lying
above~$r(x)\twohd r(y)$. By induction, this argument demonstrates the
existence of the triangle depicted in Figure~\ref{unique z}.
Note that $\cS_z(\w_0)=\cS_x(\w_0)\cap \cS_y(\w_0)$.
For later convenience we define a set
\[
\begin{array}{rl}
\cC=\left\{ \vphantom{r\left( [x,y]\right) }
(x,y,z)\in\vD\times\vD\times\vD
\st
x^{-1}y\in \Wp, \right. & r\left( x\twohd y\right)
\text{ is a horizontal line of shape $(p,0)$,} \\
&\left. \text{ and } \cS_z(\w_0)=\cS_x(\w_0)\cap \cS_y(\w_0)
\vphantom{r\left( [x,y]\right) }\right\} .
\end{array}
\]

For each $z\in\vD$, define
\[
\cH_z=\langle\delta_x \st z^{-1}x\in \Wp\text{ and }
\cS_z(\w_0)\subseteq \cS_x(\w_0)\rangle
\]
and
\[
\cK_z=\langle\delta_y \st y^{-1}z\in \Wp\text{ and }
\cS_z(\w_0)\subseteq \cS_y(\w_0)\rangle .
\]
Thus $\delta_x\in\cH_z$ if $z=z(x,y)$ for some $y\in\vD$ and a similar condition
characterizes the elements $\delta_y\in\cK_z$.
Let $T_{z',z}:\cH_z\longrightarrow\cK_{z'}$ be defined by
\[
\langle T_{z',z} \delta_x,\delta_y\rangle =
g(x^{-1}y) \text{ if } \delta_x\in\cH_z \text{ and }
\delta_y\in\cK_{z'}.
\]
Note that $\ell^2(\vD) =\oplus_{z\in\vD} \cH_z =\oplus_{z'\in\vD} \cK_{z'}$.
Since $T_{z',z}=0$ unless $z'=z$, $T$ can be expressed as a block diagonal
matrix $T=\left[ T_{z',z} \right] = \oplus_{z\in\vD} \Tzz$. Thus it is
sufficient to show that, for fixed $z\in\vD$,
\begin{equation}\label{7}
\| \Tzz \|\leq \sqrt{p+1}\;\| g\|_2.
\end{equation}
We now fix~$z$ and show that
\[
\left| \langle \Tzz f_1, f_2 \rangle\right|
\leq \sqrt{p+1}\;\| f_1 \|_2 \, \|f_2 \|_2 \, \| g\|_2
\]
for $f_1\in\cH_z$, and $f_2\in\cK_z$.  We may assume $f_1$, $f_2$,
$g\geq 0$.  Then
\[
\langle \Tzz f_1, f_2 \rangle =
\sum_{\substack{x,y\in \vD\\ (x,y,z)\in \cC}}
f_1(x)f_2(y)g(x^{-1}y).
\]
We define $g_1,g_2\in\CC\cG$ via
\[
g_1(\alpha)= \begin{cases} f_1(x) & \text{ if } \alpha=z^{-1}x \\
0 & \text{ otherwise}\end{cases}
\quad\text{ and }\quad
g_2(\beta)= \begin{cases} f_2(y) & \text{ if } \beta=y^{-1}z \\
0 & \text{ otherwise}\end{cases}
\]
and change our emphasis to obtain
\begin{equation}\label{8}
\langle \Tzz f_1, f_2 \rangle =
\sum_{\substack{x,y\in \vD\\ (x,y,z)\in \cC}}
g_1(z^{-1}x)g_2(y^{-1}z)g(x^{-1}y).
\end{equation}
We define a set
\[
\cT_p=\left\{ (\alpha,\beta,\gamma)\in\cG\times\cG\times\cG\st
\alpha,\beta,\gamma\in \Wp \text{ and } \gamma\beta\alpha
\text{ is a unit}\right\}
\]
It can be shown that if $x,y,z$ are vertices in an~$\tA_2$ building and
$\sh(x,y)=\sh(y,z)=\sh(z,x)=(p,0)$, the convex hull of $\{ x,y,z\}$
is necessarily an equilateral triangle of side length~$p$ in some apartment.
Hence, given any~$(\alpha,\beta,\gamma)\in\cT_p$ and any~$v\in\vD$ such that
$v\gamma$ exists, the vertices~$v,v\gamma,v\alpha^{-1}$ lie in a common
apartment and their convex hull is an equilateral triangle of side length~$p$.
Use~$\cT_p$ to rewrite equation~(\ref{8}) as
\[
\langle \Tzz f_1, f_2 \rangle =
\sum_{(\alpha,\beta,\gamma)\in \cT_p} \tf_1(\alpha)\tf_2(\beta)\tf_3(\gamma) ,
\]
where $\tf_i\in\CC\Wp$ for $i=1,2,3$. This is equivalent to having changed the
emphasis

{}\hfil
\beginpicture
\setcoordinatesystem units <0.5cm,0.866cm>    
\setplotarea x from -3 to 11, y from -0.7 to 2.5         
\put{from}       at  -3 1
\setlinear
\plot -1 0  3 0  1 2  -1 0 /
\put{$\bullet$}  at  -1   0
\put{$x$} [tr]   at  -1  -0.1
\put{$\bullet$}  at   3   0
\put{$y$} [t l]  at   3  -0.1
\put{$\bullet$}  at   1   2
\put{$z$} [b]    at   1   2.2
\put{to}         at   5   1
\arrow <6pt> [.3,.67] from  7 0 to 11 0
\arrow <6pt> [.3,.67] from 11 0 to  9 2
\arrow <6pt> [.3,.67] from  9 2 to  7 0
\put{$\gamma$} [t]     at   9  -0.1
\put{$\beta$} [bl]    at  10   1.1
\put{$\alpha$} [br]    at   8   1.1
\endpicture.
\hfil{}

We now complete the proof of~(\ref{7}) by proving the following result.

\begin{lem}
If $f_1,f_2,f_3\in\CC\Wp$ then
\[
\left| \sum_{(\alpha,\beta,\gamma)\in \cT_p} f_1(\alpha)f_2(\beta)f_3(\gamma)
\right| \leq
\sqrt{p+1}\;\| f_1\|_2 \, \| f_2\|_2 \, \| f_3\|_2.
\]
\end{lem}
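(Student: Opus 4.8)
We may assume $f_1,f_2,f_3\ge 0$ and write $S=\sum_{(\alpha,\beta,\gamma)\in\cT_p}f_1(\alpha)f_2(\beta)f_3(\gamma)$. By the building fact recorded just before the Lemma, fixing any base vertex $v$ with $v\gamma$ defined and putting $x=v$, $y=v\gamma$, $z=v\alpha^{-1}$ realises each $(\alpha,\beta,\gamma)\in\cT_p$ as an equilateral triangle $xyz$ of side $p$ inside an apartment, all three sides of shape $(p,0)$, with $\alpha=z^{-1}x$, $\beta=y^{-1}z$, $\gamma=x^{-1}y$. So $S$ is a weighted count of such triangles, $f_1$ weighting the side $z\twohd x$, $f_2$ the side $y\twohd z$, and $f_3$ the base $x\twohd y$; all norms occurring are honest $\ell^2$-norms over $\Wp$.

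The plan is to peel off one of the three functions by Cauchy--Schwarz, after which the factor $\sqrt{p+1}$ comes from the $p+1$ lines parallel to a side of a side-$p$ triangle. Write $S=\sum_\gamma f_3(\gamma)F(\gamma)$ with $F(\gamma)=\sum_{(\alpha,\beta):(\alpha,\beta,\gamma)\in\cT_p}f_1(\alpha)f_2(\beta)$, so that $|S|\le\|f_3\|_2\,\|F\|_2$ and it remains to prove $\|F\|_2\le\sqrt{p+1}\,\|f_1\|_2\,\|f_2\|_2$. Since $(\alpha,\beta,\gamma)\in\cT_p$ forces $\beta\alpha=\gamma^{-1}$, the quantity $\|F\|_2^2$ expands --- realising the two contributing triangles on a common base $x\twohd y$ --- as a sum over ``bowties'' $\big((x,y,z),(x,y,z')\big)$ of side-$p$ triangles sharing that base, of $f_1(z^{-1}x)f_2(y^{-1}z)f_1(z'^{-1}x)f_2(y^{-1}z')$. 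Grouping the factors as $\big[f_1(z^{-1}x)f_2(y^{-1}z')\big]\big[f_1(z'^{-1}x)f_2(y^{-1}z)\big]$ and applying Cauchy--Schwarz with respect to the symmetry $z\leftrightarrow z'$ of the set of bowties reduces the claim to
\[
\sum_{\text{bowties}}f_1(z^{-1}x)^2\,f_2(y^{-1}z')^2\ \le\ (p+1)\,\|f_1\|_2^2\,\|f_2\|_2^2 ,
\]
i.e.\ to showing that for each prescribed pair of ``outer'' elements $z^{-1}x\in\Wp$ and $y^{-1}z'\in\Wp$ there are at most $p+1$ bowties. For this one slices each of the two triangles by the $p+1$ lines parallel to the shared base --- equivalently, one applies Lemma~\ref{shape decomposition for groupoid} to split off an initial segment of every length $0,\dots,p$ --- producing an upper sub-triangle together with a trapezoidal strip exactly as in Figures~\ref{unique z} and~\ref{trapezoidal strip}; the strips being flat, a given bowtie is pinned down once one records, among the $p+1$ slicing levels, the level at which the second triangle is forced.

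The heart of the matter --- and the one new ingredient beyond the $\tA_1\times\tA_1$ argument --- is precisely this bowtie count: two triangles of $\cT_p$ sharing a side, and whose remaining vertices are tied together by a single prescribed groupoid element, occur in at most $p+1$ ways. Here the two-dimensionality is essential. The rigidity statement that three vertices with all pairwise shapes $(p,0)$ span an equilateral triangle of side $p$ in an apartment is exactly what forces the slices parallel to the shared side to be flat, so that along each of the $p+1$ levels a fixed element of $\Wp$ is hit at most once (a Hilbert--Schmidt estimate, using the uniqueness clause of Lemma~\ref{shape decomposition for groupoid}). By contrast, the naive approach --- fix one of $\alpha,\beta,\gamma$ and apply Cauchy--Schwarz on the other two --- fails outright, since the number of triangles completing a given side grows with the thickness of the building; it is only the subdivision into $p+1$ controlled levels that yields a bound independent of the thickness.
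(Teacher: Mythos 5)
Your reduction is fine as far as it goes: peeling off $f_3$ by duality, expanding $\|F\|_2^2$ as a sum over ``bowties'' (two triangles of $\cT_p$ sharing their $\gamma$-side), and the crosswise Cauchy--Schwarz using the $z\leftrightarrow z'$ involution are all correct, and they would give the stated bound \emph{provided} your counting claim holds. But that claim --- for prescribed $\alpha_0=z^{-1}x$ and $\beta_0=y^{-1}z'$ there are at most $p+1$ bowties --- is exactly where the difficulty of the lemma sits, and you have not proved it; the paragraph about ``slicing by the $p+1$ lines parallel to the shared base'' is not an argument. Once you fix a representative of the side $z\twohd x$, the vertices $y$ completing the first triangle over that side are as numerous as the thickness of the building allows (for $p=1$ they are the third vertices of all chambers containing that edge), not $p+1$ of them; the extra condition singling some of them out is that $z'=y\beta_0$ must close the second triangle. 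That condition is imposed through the right groupoid action, which --- as the paper itself stresses --- does not preserve the building structure, so a candidate $y$ carries no geometric ``level'', and since the two prescribed sides $z\twohd x$ and $y\twohd z'$ share no vertex of the bowtie, Lemma~\ref{shape decomposition for groupoid} pins nothing down. Already for $p=1$ and $\G$ an $\tA_2$ group your claim amounts to the nontrivial combinatorial assertion that for any two generators $a_x,a_y$ there are at most two generators $a_m$ with both $a_xa_m$ and $a_ma_y$ of shape $(0,1)$ --- a statement about the point--line correspondence underlying the triangle presentation that your sketch never addresses, let alone its analogue for general $p$ and for an arbitrary free (non-transitive) action.

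Contrast this with how the paper produces its $p+1$: by forming $T_3^*T_3$ with $f_3$ as the kernel, the two triangles being compared share their $\beta$-side, so the two remaining $\alpha$-sides emanate from a common vertex; the operator is split as $S_0+\cdots+S_p$ according to the length $j$ of the common initial segment of $\alpha_1,\alpha_2$, and each $S_j$ is controlled in Hilbert--Schmidt norm because the rhombus spanned by the divergent parts is uniquely determined by Lemma~\ref{shape decomposition for groupoid}, with one final Cauchy--Schwarz absorbing the shared initial piece. That decomposition is well defined precisely because the two sides being compared meet at a vertex --- the feature your bowtie configuration lacks. So either supply a genuine proof of your counting claim (which would be a new combinatorial statement, and would in any case only recover the same constant $\sqrt{p+1}$), or restructure the argument so that the two triangles you compare share a side, as in the paper.
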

\begin{proof}
Assume that $f_1,f_2,f_3\geq 0$. For a fixed $f_3$, define
$T_3:\CC\Wp \longrightarrow\CC\Wp$ by
\[
\langle T_3\delta_\alpha,\delta_\beta\rangle =
\displaystyle\sum_{\gamma :(\alpha,\beta,\gamma)\in\cT_p} f_3(\gamma)
\]
where the right hand side has at most one non-zero term, and we adopt the
convention that it is zero if the set
$\left\{\gamma\in\Wp\,|\,(\alpha,\beta,\gamma)\in\cT_p\right\}$
is empty. We will use this convention again later. Thus
\[
\sum_{(\alpha,\beta,\gamma)\in \cT_p} f_1(\alpha)f_2(\beta)f_3(\gamma) =
\langle T_3 f_1, f_2\rangle.
\]
We must show that $\| T_3\|\leq\sqrt{p+1}\;\| f_3\|_2$, or equivalently that
$\| T_3^*T_3\|\leq(p+1)\;\| f_3\|_2^2$. Now,
\begin{eqnarray*}
\langle T_3^*T_3\delta_{\alpha_1},\delta_{\alpha_2}\rangle &=&
\langle T_3\delta_{\alpha_1},T_3\delta_{\alpha_2}\rangle \\
&=& \sum_{\beta\in\cG}
\left(\displaystyle
\sum_{\gamma_1 :(\alpha_1,\beta,\gamma_1)\in\cT_p} f_3(\gamma_1)\right)
\left(\displaystyle
\sum_{\gamma_2 :(\alpha_2,\beta,\gamma_2)\in\cT_p} f_3(\gamma_2)\right) \\
&=& \sum_{\substack{\beta,\gamma_1,\gamma_2\in\cG\\
(\alpha_1,\beta,\gamma_1),(\alpha_2,\beta,\gamma_2)\in\cT_p}}
f_3(\gamma_1)f_3(\gamma_2) .
\end{eqnarray*}

We say that $\alpha_1,\alpha_2\in\cG$
{\bf share precisely~{\boldmath{$j$}} initial letters} if
\begin{itemize}
\item there exists a~$z\in\vD$ such that $z\alpha_1$ and $z\alpha_2$ are
both defined, and
\item there exist $\zeta\in W_{j,0}$, and $\talpha_1,\talpha_2\in W_{p-j,0}$
such that $\alpha_1=\zeta\talpha_1$, $\alpha_2=\zeta\talpha_2$, and
\[
\sh(\talpha_1^{-1}\talpha_2)=(p-j,p-j).
\]
\end{itemize}
Note that if $\alpha_1$, $\alpha_2\in W_{p,0}$, and if
$\alpha_1^{-1}\alpha_2$ is defined, then $\alpha_1$ and $\alpha_2$
must share precisely $j$ letters for some $0\leq j\leq p$. We have a
decomposition $T_3^*T_3 =S_0+\cdots +S_p$ where
\[
\langle S_j\delta_{\alpha_1},\delta_{\alpha_2}\rangle =
\begin{cases}
\langle T_3^*T_3\delta_{\alpha_1},\delta_{\alpha_2}\rangle  &
\text{ if } \alpha_1, \alpha_2
\text{ share precisely $j$ initial letters,} \\
0 & \text{ otherwise.}
\end{cases}
\]
It is therefore sufficient to prove that $\| S_j\|\leq\| f_3\|_2^2$
for $0\leq j\leq p$.  Diagrammatically speaking, $\langle
S_j\delta_{\alpha_1},\delta_{\alpha_2}\rangle \neq 0$ only if there
are diagrams

{}\hfil
\beginpicture
\setcoordinatesystem units <0.866cm,1cm>    
\setplotarea x from -8 to 8, y from -2.5 to 2.5         
\setlinear
\put{$\beta$}     [l]     at  -2.8  0
\put{$\zeta$}     [tr]    at  -4.1 -1.6
\put{$\talpha_1$} [tr]    at  -6.1 -0.6
\put{$\tgamma_1$} [br]    at  -6.1  0.6
\put{$\xi$}     [br]    at  -4.1  1.6
\put{$\tbeta$}   [r]     at  -5.2  0
\arrow <6pt> [.3,.67] from  -3  2 to  -3 -2
\arrow <6pt> [.3,.67] from  -3 -2 to  -5 -1
\arrow <6pt> [.3,.67] from  -5 -1 to  -7  0
\arrow <6pt> [.3,.67] from  -7  0 to  -5  1
\arrow <6pt> [.3,.67] from  -5  1 to  -3  2
\arrow <6pt> [.3,.67] from  -5  1 to  -5 -1
\vshade  -5 -1 1 -3 -2 2 /
\put{ and }           at  0    0
\put{$\beta$}     [r]     at  2.8  0
\put{$\zeta$}     [tl]    at  4.1 -1.6
\put{$\talpha_2$} [tl]    at  6.1 -0.6
\put{$\tgamma_2$} [bl]    at  6.1  0.6
\put{$\xi$}     [bl]    at  4.1  1.6
\put{$\tbeta$}   [l]     at  5.2  0
\arrow <6pt> [.3,.67] from  3  2 to  3 -2
\arrow <6pt> [.3,.67] from  3 -2 to  5 -1
\arrow <6pt> [.3,.67] from  5 -1 to  7  0
\arrow <6pt> [.3,.67] from  7  0 to  5  1
\arrow <6pt> [.3,.67] from  5  1 to  3  2
\arrow <6pt> [.3,.67] from  5  1 to  5 -1
\vshade 3 -2 2  5 -1 1 /
\endpicture
\hfil{}

\noindent whose common (shaded) sections are identical.

For a fixed $\zeta\in W_{j,0}$, let
\[
\cH_{\zeta}=\langle \delta_\alpha \st \alpha\in\Wp, \alpha=\zeta\talpha,
\talpha\in W_{p-j,0}\rangle.
\]
If $\delta_\alpha\in\cH_\zeta$ then
$\langle S_j\delta_\alpha,\delta_{\alpha'}\rangle=0$ unless
$\delta_{\alpha'}\in\cH_\zeta$. Therefore $S_j\cH_\zeta\subseteq\cH_\zeta$
and it follows that
$S_j={\displaystyle\oplus_{\zeta\in W_{j,0}}} S_j^\zeta$, where
$S_j^\zeta :\cH_\zeta\longrightarrow\cH_\zeta$ is the restriction of
$S_j$ to $\cH_\zeta$.
So it is enough to bound $S_j^\zeta$ for each $\zeta\in W_{j,0}$.
However, in this case
\begin{eqnarray*}
\| S_j^\zeta \|^2 & \leq & \| S_j^\zeta \|^2_{HS}  =
\displaystyle\sum_{\substack{
\alpha_1,\alpha_2\in\G \\
\alpha_1=\zeta\talpha_1  \\
\alpha_2=\zeta\talpha_2 \\
\talpha_1^{-1}\talpha_2\in W_{p-j,p-j}
} }
\left( \displaystyle\sum_{\substack{
\beta,\gamma_1,\gamma_2 \in\Wp \\
(\alpha_1,\beta,\gamma_1)\in\cT_p \\
(\alpha_2,\beta,\gamma_2)\in\cT_p
} }
f_3(\gamma_1)f_3(\gamma_2)  \right)^2 \\
& = & \displaystyle\sum_{\substack{
\talpha_1,\talpha_2\in W_{p-j,0} \\
\talpha_1^{-1}\talpha_2\in W_{p-j,p-j}
} }
\left( \displaystyle\sum_{\substack{
\xi\in W_{j,0},\tbeta,\tgamma_1,\tgamma_2 \in W_{p-j,0} \\
\tgamma_1 \xi, \tgamma_2 \xi, \xi^{-1} \tbeta \zeta^{-1}\in\Wp  \\
(\talpha_1,\tbeta,\tgamma_1),(\talpha_2,\tbeta,\tgamma_2)\in\cT_{p-j}
} }
f_3(\tgamma_1 \xi)f_3(\tgamma_2 \xi)  \right)^2 ,
\end{eqnarray*}
where the last term is obtained from the previous one by using the smaller
triangles formed by $\talpha_i$, $\tbeta$, and $\tgamma_i$ to enumerate the
larger ones formed by $\alpha_i$, $\beta$, and $\gamma_i$. Thus
\begin{eqnarray}\label{triple sum}
\| S_j^\zeta \|^2 \leq \displaystyle\sum_{\substack{
\talpha_1,\talpha_2\in W_{p-j,0} \\
\talpha_1^{-1}\talpha_2\in W_{p-j,p-j}
} }
\left(
\displaystyle\sum_{\substack{
\tbeta,\tgamma_1,\tgamma_2 \in W_{p-j,0} \\
(\talpha_1,\tbeta,\tgamma_1)\in\cT_{p-j} \\
(\talpha_2,\tbeta,\tgamma_2)\in\cT_{p-j}
}}
\left( \displaystyle\sum_{\substack{
\xi\in W_{j,0}\\
\tgamma_1 \xi, \tgamma_2 \xi\in\Wp
}}
f_3(\tgamma_1 \xi)f_3(\tgamma_2 \xi)  \right) \right)^2
\end{eqnarray}
where the middle sum is over all possible diagrams of the form
depicted in Figure~\ref{middle sum}. Since $\talpha_1^{-1}\talpha_2\in
W_{p-j,p-j}$, an application of Lemma~\ref{shape decomposition for
groupoid} shows that this rhombus is entirely determined by
$\talpha_1$ and $\talpha_2$ and so that the middle sum contains only
one term.
\refstepcounter{picture}
\begin{figure}[htbp]
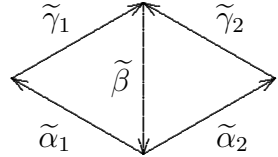
\label{middle sum}
\hfil
\beginpicture
\setcoordinatesystem units <0.866cm,1cm>    
\setplotarea x from -2 to 2, y from -1 to 1         
\setlinear
\put{$\talpha_1$} [tr]    at  -1.1 -0.6
\put{$\tgamma_1$} [br]    at  -1.1  0.6
\put{$\tbeta$}   [r]     at  -0.2  0
\put{$\talpha_2$} [tl]    at   1.1 -0.6
\put{$\tgamma_2$} [bl]    at   1.1  0.6
\arrow <6pt> [.3,.67] from   0 -1 to  -2  0
\arrow <6pt> [.3,.67] from  -2  0 to   0  1
\arrow <6pt> [.3,.67] from   0  1 to   0 -1
\arrow <6pt> [.3,.67] from   0 -1 to   2  0
\arrow <6pt> [.3,.67] from   2  0 to   0  1
\endpicture
\hfil
\caption{Diagram indexing the middle sum in~(\ref{triple sum}).}
\end{figure}
The rhombus in Figure~\ref{middle sum} is also uniquely determined by
$\tgamma_1$ and $\tgamma_2$ whenever $\tgamma_1\tgamma_2^{-1}\in W_{p-j,p-j}$,
so we in fact have
\begin{eqnarray*}
\| S_j^\zeta \|^2 & \leq & \displaystyle\sum_{\substack{
\tgamma_1,\tgamma_2 \in W_{p-j,0} \\
\tgamma_1\tgamma_2^{-1}\in W_{p-j,p-j}
}}
\left( \displaystyle\sum_{\substack{
\xi\in W_{j,0}\\
\tgamma_1 \xi, \tgamma_2 \xi\in\Wp
}}
f_3(\tgamma_1 \xi)f_3(\tgamma_2 \xi)  \right)^2 .
\end{eqnarray*}
Applying the Cauchy-Schwarz inequality we thus obtain
\begin{eqnarray*}
\| S_j^\zeta \|^2 & \leq & \displaystyle\sum_{\substack{
\tgamma_1,\tgamma_2 \in W_{p-j,0}\\
\tgamma_1\tgamma_2^{-1}\in W_{p-j,p-j}
}}
\left( \displaystyle\sum_{\substack{
\xi_1\in W_{j,0}\\
\xi_1\tgamma_1\in\Wp
}}
f_3(\tgamma_1 \xi_1)^2 \right)
\left( \displaystyle\sum_{\substack{
\xi_2\in W_{j,0}\\
\tgamma_2 \xi_2\in\Wp
}}
f_3(\tgamma_2 \xi_2)^2 \right) \\
& \leq & \left( \displaystyle\sum_{\gamma_1\in\cG} f_3(\gamma_1)^2\right)
\left( \displaystyle\sum_{\gamma_2\in\cG} f_3(\gamma_2)^2\right)  \\
& = & \| f_3 \|_2^4
\end{eqnarray*}
as desired.
\end{proof}

This concludes our proof of Theorem~\ref{gpd-thm} in the~$\tA_2$ case.
\hfill $\qedsymbol$

\section{Simply transitive actions}\label{simply transitive actions}

Suppose that~$\G\leq\shaut$ acts simply transitively on~$\vD$.
Fix any vertex~$v_0\in\vD$ and let
\[
N=\{ a\in\G\st d(v_0,av_0)=1\}.
\]
The {\bf Cayley graph} of~$\G$ constructed via right multiplication with
respect to the set~$N$ has~$\G$ itself as its vertex set and has
$\{(c,ca)\st c\in\G,a\in N\}$ as its edge set.
There is a natural action of~$\G$ on its Cayley graph via left multiplication.
Using the convention that an undirected edge between vertices~$u$ and~$v$ in
a graph represents the pair of directed edges~$(u,v)$ and~$(v,u)$, it is
immediate that the $\G$-map $c\mapsto cv_0$ from~$\G$ to~$\D$ is an
isomorphism between the Cayley graph of~$\G$ and the one-skeleton of~$\D$.
In this way we identify~$\G$ with the vertex set~$\vD$ of the building.
Connectivity of the building implies that~$N$ is a generating set for~$\G$.

It is traditional to label the directed edge~$(c,ca)$ with the
generator~$a\in N$. More generally, to the pair $(c,d)\in\G\times\G$ we
assign the
label~$c^{-1}d$. Equivalently, to the pair~$(c,cd)$ we assign the
label~$d\in\G$. Suppose this label is written as a product of generators;
$d=a_1\cdots a_j$. Then there is a path $(c,ca_1,ca_1a_2,\ldots,cd)$
from~$c$ to~$cd$ whose successive edges are labelled $a_1,\ldots,a_j$.
The left translate of~$(c,cd)$ by $b\in\G$ is~$(bc,bcd)$ and also carries
the label~$d$. Conversely, any pair $(c',c'd)$ which carries the label~$d$
is the left translate by~$b=c'c^{-1}$ of~$(c,cd)$. Thus two pairs carry
the same label if and only if one is the left translate of the other.

We define a shape function on~$\G$ by
\[
\sh(b)=\sh(v_0,bv_0)
\]
for $b\in\G$. The pair~$(c,cb)$ has label~$b$ and its shape, defined via
the identification of the Cayley graph and the one-skeleton of~$\D$, is
\[
\sh(c,cb)=\sh(cv_0,cbv_0)=\sh(v_0,bv_0)=\sh(b).
\]
A different choice of~$v_0$ leads to a shape function on~$\G$ which
differs from the first by precomposition with an inner automorphism
of~$\G$.

Suppose $\D$ is an $\tA_{n_1}\times\cdots\times\tA_{n_k}$~building and
that $\G\leq\autD$ consists of type-rotating automorphisms and acts
simply transitively on $\vD$.  Then $\G$~is called an
{\bf {\boldmath$\tA_{n_1}\times\cdots\times\tA_{n_k}$} group}.

\subsection{The case of simply transitive group actions on the
vertices of~{\boldmath{$\tA_1\times\tA_1$}} buildings.}
\label{simply transitive A1}

Suppose that $\D=\D_1\times\D_2$ is an $\tA_1\times\tA_1$ building and
$a\in\shaut$. If we write $(u,v)$ for a generic vertex of
$\D=\D_1\times\D_2$ we have
\[
a(u,v)=(a_1 u,a_2 v)
\]
for some type rotating automorphisms $a_i$ of $\D_i$.  Indeed, suppose
that $(u,v)$ and $(u,v')$ are neighbouring vertices in~$\D$.
Then~$a(u,v)=(x,y)$ and $a(u,v')=(x',y')$ are neighbouring vertices
in~$\D$ and the type-rotating assumption on~$a$ means that
$\tau(x)=\tau(x')$.  Since neighbouring vertices in~$\D_1$ have
distinct types we must have $x=x'$. By induction on~$d(v,v')$, we see
that the first coordinate of~$a(u,v)$ is independent
of~$v\in\D_2$. Similarly, the second coordinate of~$a(u,v)$ is
independent of~$u\in\D_1$. Thus there exist maps~$a_1$ of~$\D_1$ and
$a_2$ of~$\D_2$ such that~$a(u,v)=(a_1 u,a_2 v)$. Since~$a\in\shaut$
it follows that~$a_i$ is a (type-rotating) automorphism of~$\D_i$. Thus
each~$a\in\shaut$ acts as~$a_1\times a_2$ for some (type-rotating)
automorphisms~$a_i$ of~$\D_i$.

Consider an $\tA_1\times\tA_1$ group, that is a group $\G\leq\shaut$ which acts
simply transitively on~$\vD$. A simple example of such a group
is $\FF_p\times\FF_q$ where $\FF_j$ denotes the free group on~$j$ generators.
More generally, $\G=\G_1\times\G_2$ is an~$\tA_1\times\tA_1$ group whenever
$\G_1$ and $\G_2$ are $\tA_1$ groups. However there are also many examples of
$\tA_1\times\tA_1$ groups generated by $\tA_1$ groups $\G_1$ and $\G_2$
where the elements of~$\G_1$ do not all commute with those of~$\G_2$.
 In fact S. Mozes \cite[Theorem 3.2]{Mo} (see also~\cite{bm}) has given
examples of
$\tA_1$ groups $\G_1$, $\G_2$ acting on trees $T_1$, $T_2$ which are embedded
``diagonally'' in $\aut(T_1 \times T_2)$, so that the embeddings do not commute
but the group generated by $\G_1 \cup \G_2$ is an $\tA_1\times\tA_1$ group
acting on
$T_1 \times T_2$.
In other words, even though each~$a\in\G$ is a direct
product of automorphisms,~$\G$ is not a direct product
of the groups $\G_1$ and $\G_2$ .

As
above, fix a vertex $v_0=(v_1,v_2)\in\vD$ and
suppose that~$b=b_1\times b_2\in\G$. Recall that
\[
\sh(b)=\sh(v_0,bv_0)= \left( d(v_1,b_1v_1), d(v_2,b_2v_2)\right),
\]
so that
\[
\sh(b_1\times b_2)=\left(\sh_1(b_1),\sh_2(b_2)\right)
\]
where $\sh_i$ is the shape function on $\aut(\D_i)$.
Consider the generating set
\[
N=\{ a\in\G\st d(v_0,av_0)=1\}
\]
of~$\G$. Let
\[
N_1=\{ a\in\G\st \sh(a)=(1,0)\}
\quad \text{ and }\quad
N_2=\{ a\in\G\st \sh(a)=(0,1)\} .
\]
Then each element~$c\in\G$ has a unique reduced expression of the form
\[
c=a_1\cdots a_mb_{m+1}\cdots b_{m+n}
\]
for some $a_i\in N_1$ and $b_i\in N_2$ and the shape function satisfies
\[
\sh( a_1\cdots a_mb_{m+1}\cdots b_{m+n})=(m,n) .
\]

\begin{cor}\label{A1 group case}
If $\G$ is an~$\tA_1\times\tA_1$ group, $\rho$
is the right regular representation of $\ell^1(\G)$ on $\ell^2(\G)$ defined by
$\rho(g)f=f*g$, and $g\in\CC\G$ is supported on words of shape $(m,n)$ then
\[
\|\rho(g)\|\leq (m+1)(n+1)\;\|g\|_2.
\]
Consequently~$\G$ has property~(RD).
\end{cor}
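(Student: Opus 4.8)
The plan is to obtain Corollary~\ref{A1 group case} as the simply transitive specialization of Theorem~\ref{gp-cor}, which has already been reduced to Theorem~\ref{gpd-thm} and proved in Section~\ref{A1 case}. First I would check that the hypotheses of Theorem~\ref{gp-cor} are met: an $\tA_1\times\tA_1$ group $\G$ is by definition a subgroup of $\autD$ consisting of type-rotating automorphisms and acting simply transitively on $\vD$. Since the type-rotating automorphisms are exactly the shape-preserving ones, $\G\leq\shaut$; and a simply transitive action is in particular free. With $v_0$ the fixed base vertex, the shape function $\sh(c)=\sh(v_0,cv_0)$ on $\G$ is precisely the one appearing in Theorem~\ref{gp-cor}, and it agrees with the shape function described explicitly above via reduced expressions $c=a_1\cdots a_m b_{m+1}\cdots b_{m+n}$ with $\sh(c)=(m,n)$. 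So ``supported on words of shape $(m,n)$'' in the corollary means exactly what it means in the theorem.

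Given this, the inequality is immediate: substituting the $\tA_1\times\tA_1$ value $p(m,n)=(m+1)(n+1)$ into the conclusion of Theorem~\ref{gp-cor} gives, for $g\in\CC\G$ supported on words of shape $(m,n)$ and any $f\in\ell^2(\G)$,
\[
\|\rho(g)f\|_2=\|f*g\|_2\leq (m+1)(n+1)\,\|f\|_2\,\|g\|_2,
\]
which is the asserted bound on the operator norm $\|\rho(g)\|$.

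For the final sentence, property~(RD) follows from Corollary~\ref{RD-cor}, since $\G\leq\autD$ acts freely on $\vD$. Alternatively one can argue directly as in the proof of that corollary: the length $|c|=d(v_0,cv_0)$ on $\G$ satisfies $|c|=m+n$ when $\sh(c)=(m,n)$, so each value $\ell$ of $|c|$ is shared by words of only $\ell+1$ distinct shapes $(m,\ell-m)$, and $(m+1)(\ell-m+1)\leq(\ell+1)^2$ is polynomial in $\ell$; the standard Haagerup-style summation argument (as in~\cite[Lemma~1.5]{Hag} or~\cite[Chapter~III.5, Theorem~5]{Connes}) then shows that every function on $\G$ rapidly decreasing relative to $|\cdot|$ lies in the reduced $C^*$-algebra of $\G$. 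I do not anticipate any genuine obstacle: all of the analytic content is already contained in Theorem~\ref{gpd-thm} and Corollary~\ref{RD-cor}, and the only point needing care is the routine identification of the two descriptions of the shape of a word, which is supplied by the preceding discussion of reduced expressions.
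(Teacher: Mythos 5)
Your proposal is correct and follows the paper's own route: the paper also deduces this corollary directly from Theorem~\ref{gp-cor} (with $p(m,n)=(m+1)(n+1)$) and Corollary~\ref{RD-cor}, the only added content in your write-up being the routine verification that an $\tA_1\times\tA_1$ group satisfies the hypotheses and that the two descriptions of shape agree.
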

\begin{proof}
These  results follow from Theorem~\ref{gp-cor}
and Corollary~\ref{RD-cor}.
\end{proof}

\begin{remark}
Since~$\ZZ^2$ and the free group~$\FF_2$ can both be realized as subgroups of
$\tA_1\times\tA_1$ groups, our approach provides a unified method to establish
property~(RD) for these groups.
This answers a question in~\cite[Section~6]{Ha2}.
\end{remark}

\subsection{The case of simply transitive group actions on the vertices
of~{\boldmath{$\tA_2$}} buildings.}

Suppose now that $\D$~is an $\tA_2$~building and $\G\leq\shaut$~acts
simply transitively on~$\vD$.  Recall that such a group~$\G$ is called
an $\tA_2$~group. A detailed account of
$\tA_2$~groups may be found in \cite{CMSZ}, while \cite{RR} and
\cite{RS} have quick introductions to $\tA_2$~buildings and groups.
Some, but not all, $\tA_2$~groups can be embedded as lattices in
matrix groups of the form $PGL_3(K)$ where $K$~is a local field
\cite[II \S8]{CMSZ}.

For an~$\tA_2$ group~$\G$, the shape function~$\sh$ defined
in~\S\ref{simply transitive actions}  has an illuminating algebraic
interpretation. Recall that for any fixed vertex~$v_0\in\vD$ the set
\[
N=\{ a\in\G \st \sh(v_0,av_0)=(1,0) \}
\]
is a set of generators of~$\G$.  In terms of these generators $\G$~has
a presentation of the form
\[
\G=\langle a_i \st a_xa_ya_z=1 \text{ for } (x,y,z)\in\cT\rangle
\]
where $\cT$ is a so-called triangle presentation.
Each element of~$\G$ has a unique shortest expression of the form
\[
x=a_{i_1}\cdots a_{i_m}a^{-1}_{i_{m+1}}\cdots a^{-1}_{i_{m+n}}
\]
where the $a_{i_j}\in N$ and the shape function~$\sh$ on~$\G$ is given
by
\[
\sh(a_{i_1}\cdots a_{i_m}a^{-1}_{i_{m+1}}\cdots a^{-1}_{i_{m+n}}) = (m,n).
\]

\begin{cor}\label{A2 group case}
Let $\G$ be an~$\tA_2$ group and $\rho$ be the right regular representation of
$\ell^1(\G)$ on $\ell^2(\G)$ defined by $\rho(g)f=f*g$. If $g\in\CC\G$ is
supported
on words of shape $(m,n)$ then
\[
\|\rho(g)\|\leq
1/2(m+1)(n+1)(m+n+2)\sqrt{\max(m,n)+1}\;\|g\|_2.
\]
Moreover,~$\G$ has property~(RD).
\end{cor}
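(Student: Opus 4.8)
The plan is to derive both assertions directly from results already established, since an $\tA_2$ group is by definition a group $\G\leq\shaut$ acting simply transitively on the vertex set $\vD$ of an $\tA_2$ building $\D$, and in particular $\G$ acts freely on $\vD$. So Theorem~\ref{gp-cor} and Corollary~\ref{RD-cor} both apply, and essentially nothing beyond bookkeeping is needed; this mirrors the proof of Corollary~\ref{A1 group case} in the $\tA_1\times\tA_1$ case.

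First I would fix a vertex $v_0\in\vD$ and check that the shape function appearing in the statement of the corollary --- the one attached to the normal form $x=a_{i_1}\cdots a_{i_m}a_{i_{m+1}}^{-1}\cdots a_{i_{m+n}}^{-1}$ coming from the triangle presentation --- coincides with the geometric shape function $\sh(c)=\sh(v_0,cv_0)$ used in Theorem~\ref{gp-cor}. This is the only point that requires any care, and it was carried out in \S\ref{simply transitive actions}: under the $\G$-equivariant identification of the Cayley graph of $\G$ with the one-skeleton of $\D$, each generator $a\in N$ corresponds to an edge of shape $(1,0)$, a shortest word for $x$ corresponds to a minimal gallery inside the convex hull of $v_0$ and $xv_0$, and shapes add along such decompositions by Lemma~\ref{shape decomposition for vertices}, whence the word of shape $(m,n)$ in the triangle-presentation sense is exactly an element with $\sh(v_0,xv_0)=(m,n)$.

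With that identification in hand, the first inequality is immediate from Theorem~\ref{gp-cor} applied to $\D$ and the freely acting group $\G\leq\shaut$: for $g\in\CC\G$ supported on words of shape $(m,n)$ and $f\in\ell^2(\G)$ one has $\|f*g\|_2\leq p(m,n)\,\|f\|_2\,\|g\|_2$, and in the $\tA_2$ case $p(m,n)=\tfrac12(m+1)(n+1)(m+n+2)\sqrt{\max(m,n)+1}$. Since $\rho(g)f=f*g$, taking the supremum over $\|f\|_2=1$ gives $\|\rho(g)\|\leq p(m,n)\|g\|_2$, which is the stated bound. Finally, property~(RD) for $\G$ follows from Corollary~\ref{RD-cor}, which applies verbatim because $\G\leq\autD$ acts freely on $\vD$. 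I do not anticipate any genuine obstacle: the entire content of the corollary is already contained in Theorem~\ref{gp-cor} and Corollary~\ref{RD-cor}, and the only thing to verify is the compatibility of the algebraic and geometric descriptions of the shape function on $\G$, which is routine given \S\ref{simply transitive actions}.
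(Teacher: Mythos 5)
Your proposal is correct and follows essentially the same route as the paper, which deduces the corollary directly from Theorem~\ref{gp-cor} and Corollary~\ref{RD-cor}; your additional remark matching the algebraic shape function from the triangle presentation with the geometric one $\sh(v_0,cv_0)$ is exactly the identification made in Section~\ref{simply transitive actions}.
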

\begin{proof}
These results follow from Theorem~\ref{gp-cor}
and Corollary~\ref{RD-cor}.
\end{proof}

\end{document}